\newtheorem{thm}{Theorem}[section]
\newtheorem{lem}[thm]{Lemma}
\newtheorem{prop}[thm]{Proposition}
\newtheorem{cor}[thm]{Corollary}
\theoremstyle{definition}
\newtheorem{dfn}[thm]{Definition}
\newtheorem{rem}[thm]{Remark}
\newtheorem{ex}[thm]{Example}
\theoremstyle{remark}
\newtheorem*{ac}{Acknowledgments}
\newtheorem*{proof of claim}{Proof of Claim}
\numberwithin{equation}{thm}
\def\Ann{\mathsf{Ann}}
\def\ca{\mathsf{ca}}
\def\cm{\mathsf{CM}}
\def\cok{\operatorname{Coker}}
\def\codim{\operatorname{codim}}
\def\dim{\operatorname{dim}}
\def\Ext{\operatorname{Ext}}
\def\height{\operatorname{ht}}
\def\Hom{\operatorname{Hom}}
\def\image{\operatorname{Im}}
\def\jac{\operatorname{jac}}
\def\ker{\operatorname{Ker}}
\def\m{\mathfrak{m}}
\def\mod{\operatorname{mod}}
\def\n{\mathfrak{n}}
\def\p{\mathfrak{p}}
\def\q{\mathfrak{q}}
\def\spec{\operatorname{Spec}}
\def\sing{\operatorname{Sing}}
\def\V{\mathrm{V}}
\def\edd{\operatorname{edd}}
\def\a{\mathfrak{a}}
\def\NG{\operatorname{NG}}
\def\NF{\operatorname{NF}}
\def\RHom{\operatorname{\mathbf{R}Hom}}
\begin{document}
\allowdisplaybreaks
\title{Stability of annihilators of cohomology and closed subsets defined by Jacobian ideals }
\author{Kaito Kimura}
\address{Graduate School of Mathematics, Nagoya University, Furocho, Chikusaku, Nagoya 464-8602, Japan}
\email{m21018b@math.nagoya-u.ac.jp}
\thanks{2020 {\em Mathematics Subject Classification.} 13D07; 13C15; 13N15}
\thanks{{\em Key words and phrases.} cohomology annihilator, singular locus, Jacobian ideal, equidimensional.}
\thanks{The author was partly supported by Grant-in-Aid for JSPS Fellows Grant Number 23KJ1117.}

\begin{abstract}
Let $R$ be a commutative Noetherian ring of dimension $d$.
In this paper, we first show that some power of the cohomology annihilator annihilates the $(d+1)$-th Ext modules for all finitely generated modules when either $R$ admits a dualizing complex or $R$ is local.
Next, we study the Jacobian ideal of affine algebras over a field and equicharacteristic complete local rings, and characterize the equidimensionality of the ring in terms of the singular locus and the closed subsets defined by the cohomology annihilator and the Jacobian ideal.
\end{abstract}
\maketitle
\section{Introduction} 

Throughout the present paper, all rings are assumed to be commutative and Noetherian.
For a ring $R$ and an integer $n$, denote by $\ca^n(R)$ the ideal consisting of elements $a$ such that $a\Ext_R^n(M,N)=0$ for all finitely generated $R$-modules $M,N$.
The union $\bigcup_{n\ge 0}\ca^n(R)$ is called \textit{cohomology annihilator} of $R$, which is denoted by $\ca(R)$.
The ascending chain of radicals of $\ca^n(R)$ is stable for all large $n$ since $R$ is Noetherian.
Iyengar and Takahashi \cite{IT} proved that when $R$ is either a localization of an affine algebra over a field or an equicharacteristic excellent local ring, the radical of $\ca^{2d+1}(R)$ is equal to that of $\ca(R)$, where $d=\dim R$.
They also showed that these are defining ideal of the singular locus $\sing R$ of $R$, which is the set of prime ideals $\p$ of $R$ such that $R_\p$ is not regular.
Dey and Takahashi \cite{DeyT} implicitly proved that when $R$ is a Cohen--Macaulay local ring with a canonical module, the radical of $\ca^n(R)$ is stable for all $n\ge \dim R+1$; see Remark \ref{cm case}(1).
The main result in this direction refines the results mentioned above.
 
\begin{thm}[Theorem \ref{V(d+1)} and Corollary \ref{SingV=(d+1)}]\label{main1 V(d+1)}
Let $R$ be a ring of dimension $d$.
Suppose either that $R$ admits a dualizing complex or that $R$ is local.
Then $\V(\ca(R))=\V(\ca^{d+1}(R))$ holds.
In particular, if $R$ is quasi-excellent, $\sing R=\V(\ca^{d+1}(R))$ holds.
\end{thm}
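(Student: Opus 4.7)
The plan is to reduce to a complete local setting admitting a dualizing complex, and then use biduality to turn the annihilator of a high-degree $\Ext$ into an annihilator of $\Ext^{d+1}$.

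The inclusion $\V(\ca(R))\subseteq\V(\ca^{d+1}(R))$ is immediate from $\ca^{d+1}(R)\subseteq\ca(R)$, so the content is to prove the reverse: every $a\in\ca^n(R)$ (for some $n>d+1$) has a power $a^{k}\in\ca^{d+1}(R)$. First, in the local case, the $\m$-adic completion $\widehat R$ is faithfully flat and preserves $\Ext$ between finitely generated modules; thus membership in $\ca^n$ and $\ca^{d+1}$ transfers compatibly between $R$ and $\widehat R$, so it suffices to treat the case when $R$ is complete local, and such a ring admits a dualizing complex. Under the other hypothesis $R$ is already equipped with a dualizing complex. So we may assume throughout that $R$ admits a dualizing complex $D$.

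For finitely generated $M, N$, set $(-)^{\vee}:=\RHom_R(-,D)$. Biduality on the bounded derived category of finitely generated modules yields
\[
\RHom_R(M,N)\simeq\RHom_R(N^{\vee},M^{\vee}),
\]
and local duality confines the cohomology of $M^{\vee}$ and $N^{\vee}$ to a window of length at most $d$. A hypercohomology spectral sequence then presents $\Ext^{d+1}_R(M,N)$ as a finite iterated extension built from classical modules $\Ext^{p}_R\bigl(H^{i}(N^{\vee}),H^{j}(M^{\vee})\bigr)$, with $p$ ranging over a bounded set depending only on $d$.

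Since $\ca^1(R)\subseteq\ca^2(R)\subseteq\cdots$ (via syzygy shifts), the assumption $a\in\ca^n(R)$ implies that $a$ annihilates every $\Ext^{m}$ with $m\geq n$; a dimension-shifting argument using long exact sequences then produces a power $a^{k}$, with $k$ depending only on $a$, $n$, and $d$, that annihilates every $\Ext^{p}_R$ term occurring in the spectral sequence. Tracking $a$ through the finitely many pages (controlled by $d$) and through the assembly of $E_\infty$ into the abutment yields an exponent $k'$ independent of $(M,N)$ with $a^{k'}\Ext^{d+1}_R(M,N)=0$, whence $a^{k'}\in\ca^{d+1}(R)$. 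The ``in particular'' statement then follows immediately from the Iyengar--Takahashi identification $\sing R=\V(\ca(R))$ in the quasi-excellent case. The main obstacle is precisely the uniformity of the exponent in $(M,N)$: every differential and every step in the extension assembly of the spectral sequence can in principle multiply the required power of $a$, so the argument must exploit that the spectral sequence has only a bounded number of nontrivial pages and extension steps (controlled by $d$) in order to keep the final exponent independent of the choice of modules.
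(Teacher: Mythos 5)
There is a genuine gap at the heart of your argument, namely the step where a power of $a$ is claimed to annihilate every term $\Ext^p_R\bigl(H^i(N^\vee),H^j(M^\vee)\bigr)$ occurring in the hypercohomology spectral sequence. Dimension shifting only goes one way: $\Ext^{p+t}_R(X,Y)\cong\Ext^t_R(\Omega^p X,Y)$ for $t\ge 1$, so $a\in\ca^n(R)$ controls $\Ext^m_R$ only in degrees $m\ge n$; it gives no control of $\Ext^p_R$ in low degrees $p$ for arbitrary arguments, since there are no cosyzygies to shift upwards. And low degrees genuinely occur: normalizing $D$ so that the cohomologies of $M^\vee$ and $N^\vee$ sit in degrees $0,\dots,d$, the $E_2$-terms contributing to total degree $d+1$ have $p=d+1+i-j\in[1,2d+1]$, so terms such as $\Ext^1_R\bigl(H^0(N^\vee),H^d(M^\vee)\bigr)$ appear. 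Such modules are not annihilated by powers of elements of $\ca(R)$ in general: for a two-dimensional normal local domain with an isolated singularity, $\ca(R)$ is $\m$-primary, yet $\Ext^1_R(R/xR,R)\cong R/xR$ has dimension one, so an element $a\in\ca(R)$ chosen outside every minimal prime of $xR$ has no power annihilating it. Hence the plan of killing the spectral sequence termwise cannot work, and nothing weaker in your outline yields annihilation of the abutment $\Ext^{d+1}_R(M,N)$.

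The paper circumvents exactly this difficulty by never attempting to annihilate low-degree Ext modules with general arguments. It isolates the single ideal $I=\bigcap_{M\in\mod R}\Ann\Ext^{d+1}_R(M,R)$, proves via a projective approximation of the dualizing complex that $\V(I)=\NG R$ (Proposition 2.2 and Corollary 2.3), hence $\V(I)\subseteq\sing R\subseteq\V(\ca(R))$ and $\ca(R)^l\subseteq I$ for some $l>0$; then the exact sequence induced by $0\to\Omega N\to R^{\oplus m}\to N\to 0$ gives $I\cdot\ca^{i+1}(R)\subseteq\ca^i(R)$ for every $i>d$, and iterating from an $n$ with $\ca^{n+1}(R)=\ca(R)$ yields $\ca(R)^{l(n-d)+1}\subseteq\ca^{d+1}(R)$, with the uniformity in $(M,N)$ automatic because $I$ is one fixed ideal. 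Also note that your reduction to the complete local case, while the same in spirit as the paper's, needs more than flat base change: finitely generated $\widehat R$-modules need not be extended from $R$, which is why the paper invokes a separate lemma asserting $\sqrt{\smash[b]{\ca^{n}(R)}}=\sqrt{\smash[b]{\ca^{n}(\widehat{R})\cap R}}$.
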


\noindent Theorem \ref{main1 V(d+1)} provides the smallest number at which $\{\sqrt{\smash[b]{{\ca^n(R)}}}\}_{n\ge 0}$ stabilizes under several standard assumptions; see Remark \ref{cm case}(2).
The latter part of Theorem \ref{main1 V(d+1)} is a consequence of results in \cite{DLT, IT}.

Let $k$ be a field, and $R=k[X_1,\dots,X_m]/(f_1,\ldots,f_n)$ a quotient of a polynomial ring (resp. $R=k\llbracket X_1,\dots,X_m\rrbracket /(f_1,\ldots,f_n)$ a quotient of a formal power series ring) over $k$.
We denote by $J_r^k(R)$ (resp. $J_r(R)$) the ideal of $R$ generated by the $(r+s)$-minors of the $m\times n$ matrix $(\partial f_j/\partial X_i)$ called the Jacobian matrix, where $s=\height (f_1,\ldots,f_n)$.
The supremum of $\dim R-\dim R/\p$ over all minimal prime ideals $\p$ of $R$ is denoted by $\edd R$.
The ideal $J_0^k(R)$ (resp. $J_0(R)$) is called the \textit{Jacobian ideal}, and $R$ is called \textit{equidimensional} if $\edd R=0$.
The following is a well-known characterization of the regularity of rings via the Jacobian ideal, obtained as a corollary of the classical result known as the Jacobian criterion: for any affine $k$-algebra (resp. an equicharacteristic complete local ring) $R$ with $e=\edd R$, one has $\sing R\subseteq\V(J_e^k(R))$ and $\spec R=\V(J_{e+1}^k(R))$ (resp. $\sing R\subseteq\V(J_{e}(R))$ and $\spec R=\V(J_{e+1}(R))$).
The theorem below refines this fact.

\begin{thm}[Corollary \ref{main cor of edd}]\label{main equidim}
Let $k$ be a perfect field and let $n\ge 0$ be an integer.
\begin{enumerate}[\rm(1)]
\item For an affine $k$-algebra $R$ of dimension $d$, the following are equivalent:
\begin{enumerate}[\rm(i)]
\item The inequality $\edd R\le n$ holds;
\item For any field $l$ and any affine $l$-algebra $S$ with $\spec S\cong\spec R$, $\sing S \subseteq \V(J_n^l(S))$ holds;
\item For any field $l$ and any affine $l$-algebra $S$ with $\spec S\cong\spec R$, $\spec S = \V(J_{n+1}^l(S))$ holds;
\item For any field $l$ and any affine $l$-algebra $S$ with $\spec S\cong\spec R$, $J_n^l(S) \subseteq \sqrt{\ca^{d+1}(S)}$ holds.
\end{enumerate}
\item For an equicharacteristic complete local ring $(R,\m,k)$ of dimension $d$, the following are equivalent:
\begin{enumerate}[\rm(i)]
\item The inequality $\edd R\le n$ holds;
\item For any equicharacteristic complete local ring $S$ with $\spec S\cong\spec R$, $\sing S \subseteq \V(J_n(S))$;
\item For any equicharacteristic complete local ring $S$ with $\spec S\cong\spec R$, $\spec S = \V(J_{n+1}(S))$;
\item For any equicharacteristic complete local ring $S$ with $\spec S\cong\spec R$, $J_n(S) \subseteq \sqrt{\ca^{d+1}(S)}$.
\end{enumerate}
\end{enumerate}
\end{thm}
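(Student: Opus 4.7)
The plan is to establish (i)$\Rightarrow$(ii)--(iv) via the classical Jacobian criterion together with Theorem \ref{main1 V(d+1)}, and to prove each converse by specializing the universal condition to a carefully chosen $S$.

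The key preliminary is that $\edd R=\sup_\p\bigl(\dim\spec R-\dim V(\p)\bigr)$ (over minimal primes $\p$) is a topological invariant of $\spec R$; thus $\spec S\cong\spec R$ forces $\edd S=\edd R$. Granting (i), this yields $\edd S\le n$ for every admissible $S$, and the classical Jacobian criterion applied to $S$ gives (ii) and (iii). For (iv), combine (ii) with Theorem \ref{main1 V(d+1)} (using quasi-excellence of $S$) to rewrite $\sing S=\V(\ca^{d+1}(S))$, which converts (ii) into $J_n^l(S)\subseteq\sqrt{\ca^{d+1}(S)}$.

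For (iii)$\Rightarrow$(i), apply (iii) to $S:=R_{\mathrm{red}}$, which is still admissible with $\spec S\cong\spec R$ and $\edd S=\edd R$. Since $S$ is reduced, $\V(J_{n+1}^l(S))=\spec S$ forces $J_{n+1}^l(S)=0$, and the converse of the Jacobian criterion---valid for reduced affine algebras over a perfect field by generic smoothness, with an analog for equicharacteristic reduced complete local rings---then gives $\edd R\le n$.

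For the main converse, (ii) or (iv) $\Rightarrow$ (i), take $S:=R_{\mathrm{red}}[\epsilon]/(\epsilon^2)$, again an admissible ring with $\spec S\cong\spec R$. Since $\epsilon$ is a nonzero nilpotent at every local ring of $S$, we have $\sing S=\spec S$; Theorem \ref{main1 V(d+1)} then gives $\V(\ca^{d+1}(S))=\spec S$, so $\sqrt{\ca^{d+1}(S)}=\sqrt{0_S}$, and (iv) for $S$ collapses to (ii) for $S$, namely $J_n^l(S)\subseteq\sqrt{0_S}$. A direct block-matrix computation with the presentation $S=l[X_1,\dots,X_m,\epsilon]/(f_1,\dots,f_r,\epsilon^2)$ (noting $h_S=h_{R_{\mathrm{red}}}+1$) shows that the $(n+h_S)$-minors of the full Jacobian contain all $(n+h+1)$-minors of the top-left block $(\partial f_j/\partial X_i)$; equivalently $J_{n+1}^l(R_{\mathrm{red}})\cdot S\subseteq J_n^l(S)$. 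Since $\sqrt{0_S}\cap R_{\mathrm{red}}=0$, the hypothesis $J_n^l(S)\subseteq\sqrt{0_S}$ forces $J_{n+1}^l(R_{\mathrm{red}})=0$, and the converse Jacobian criterion as above gives $\edd R=\edd R_{\mathrm{red}}\le n$. The main obstacle is this block-Jacobian computation: the nilpotent extension $R_{\mathrm{red}}[\epsilon]/(\epsilon^2)$ is precisely what lets the universal hypothesis (ii)/(iv) ``see'' the next layer of Jacobian minors of $R_{\mathrm{red}}$. Part~(2) for equicharacteristic complete local rings proceeds by the same strategy with formal power series replacing polynomial rings.
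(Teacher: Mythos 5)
Your route is genuinely different from the paper's and, in the affine case, it works. The paper proves the hard converses via Theorem \ref{origin of main}: for each offending minimal prime $\p$ it manufactures an ideal with the prescribed radical (built from $(f_1,\dots,f_{h-1})+\p^2$, respectively from the radical ideal itself, with the chosen generators multiplied by an element $\gamma\in\bigcap_i\p_i\setminus\p$), and it only ever invokes the Jacobian criterion for \emph{equidimensional} rings (complete intersections), citing Eisenbud resp.\ Wang. You instead test the universal hypotheses on just two rings, $R_{\mathrm{red}}$ and the square-zero extension $S=R_{\mathrm{red}}[\epsilon]/(\epsilon^2)$; your block computation giving $J_{n+1}(R_{\mathrm{red}})S\subseteq J_n(S)$ is correct (it is in effect the $a=1$ case of the mechanism behind Theorem \ref{affine ne}, which the paper states but does not use for Corollary \ref{main cor of edd}), and together with $\sqrt{0_S}=\epsilon S$ and $\epsilon S\cap R_{\mathrm{red}}=0$ it reduces (ii)/(iv) to $J_{n+1}(R_{\mathrm{red}})=0$, while (iii) applied to $R_{\mathrm{red}}$ gives the same conclusion with no computation. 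The forward implications and the translation (ii)$\Leftrightarrow$(iv) via Corollary \ref{SingV=(d+1)} coincide with the paper. For part (1) the remaining ingredient--at each minimal prime of a reduced affine algebra over a perfect field the Jacobian matrix has rank equal to the height of that prime, so $J_{n+1}=0$ forces $\edd\le n$--is indeed standard generic smoothness (conormal sequence plus separable generation of the finitely generated residue field), so your argument there is complete and arguably cleaner than the paper's.

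The gap is in part (2), where you appeal to ``an analog for equicharacteristic reduced complete local rings.'' For quotients of $A=k\llbracket X_1,\dots,X_m\rrbracket$ there is no generic-smoothness statement to quote: residue fields at non-maximal primes are not finitely generated over $k$, so the argument via $\dim_{\kappa(\p)}\Omega_{\kappa(\p)/k}=\operatorname{trdeg}_k\kappa(\p)$ breaks down, and the available complete-local Jacobian criteria (the results of Wang that the paper cites) are stated for equidimensional rings, whereas your $R_{\mathrm{red}}$ is by assumption \emph{not} equidimensional in exactly the case at issue. What you need--that the Jacobian of a generating set of the radical ideal $I=\p_0\cap\dots\cap\p_s$ has rank at least $\height\p$ modulo a minimal prime $\p$ of large height--requires a bridging argument: e.g.\ choose $f_1,\dots,f_h\in\p$ with $(f_1,\dots,f_h)A_\p=\p A_\p$, apply the equidimensional criterion to the complete intersection $A/(f_1,\dots,f_h)$, then pass to generators of $I$ by multiplying by $\gamma\in\bigcap_i\p_i\setminus\p$ and using $\partial(\gamma f_r)/\partial X_t\equiv\gamma\,\partial f_r/\partial X_t$ modulo $\p$. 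This is precisely the $\gamma$-trick occupying the second half of the paper's proof of Theorem \ref{origin of main}, so your local case is fillable, but as written the one-line appeal to an ``analog'' asserts exactly the nontrivial point rather than proving it.
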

\noindent In (ii), (iii), and (iv) of the above theorem, it is sufficient to consider only the case where the (residue) field is $k$; see Corollary \ref{main cor of edd}.
As mentioned earlier, the implications (i)$\Rightarrow$(ii) and (i)$\Rightarrow$(iii) are known.
It is worth mentioning that even if $\sing R\subseteq\V(J_n^k(R))$ and $\spec R=\V(J_{n+1}^k(R))$ hold for an affine algebra $R$ over a field $k$, it does not necessarily follow that $\edd R\le n$; see Example \ref{fixringnothold}.
Our new idea is to view $\edd R$ as a topological invariant of the spectrum of the ring and to show (ii)$\Rightarrow$(i) and (iii)$\Rightarrow$(i).
It is clear that (iv)$\Rightarrow$(ii) holds.
Thanks to Theorem \ref{main1 V(d+1)}, the converse (ii)$\Rightarrow$(iv) is obtained.

Consider Theorem \ref{main equidim} in the case when $n=0$.
In this case, the inclusion relations in (ii) and (iv) can be rewritten as equalities; see Corollary \ref{equidimversionofmain}.
Theorem \ref{main equidim} provides a necessary and sufficient condition for a ring to be equidimensional in terms of the Jacobian ideal.
By Wang \cite{W, W3}, under some assumptions, and by Iyengar and Takahashi \cite{IT2}, in the general case, it was proved that $\ca^{d+1}(R)$ contains some power of the Jacobian ideal of $R$ when $R$ is equidimensional, which means that (i)$\Rightarrow$(iv) holds.
Combining this with the trivial implication (iv)$\Rightarrow$(ii), we have (i)$\Rightarrow$(ii) that is a fact derived from the Jacobian criterion mentioned earlier.
Theorem \ref{main1 V(d+1)} says that (i)$\Rightarrow$(ii) and (i)$\Rightarrow$(iv) are equivalent.

Other results of this paper are explained.
In Theorem \ref{main equidim}, we considered all rings for which the spectrum are homeomorphic. 
On the other hand, in Theorems \ref{affine ne} and \ref{ct and t}, we can compare the inclusion relations in (ii) with the equalities in (iii) for a fixed ring.
Using these theorems and the result in \cite{IT2}, Propositions \ref{gen. of EandW} and \ref{new d+1} provide an elementary proof of (i)$\Rightarrow$(ii) and (i)$\Rightarrow$(iii) in Theorem \ref{main equidim} without employing the terminology of smoothness used in the original proof of the Jacobian criterion.

The organization of this paper is as follows. 
In Sections 2, we consider the relationship between the closed subset of the spectrum of a ring defined by the cohomology annihilator and the singular locus, and prove Theorem \ref{main1 V(d+1)}.
In Section 3, we study the ideal generated by the minors of the Jacobian matrix and give Theorem \ref{main equidim}.
Section 4 is an appendix and provides, using elementary arguments, that the Jacobian ideal is well-defined for equicharacteristic complete local rings.
At the end of Section 4, an unresolved question is presented.

\section{Asymptotic stability of the radicals of cohomology annihilators}

This section studies the number at which the ascending sequence formed by the radicals of the cohomology annihilator stabilizes.
The main result of this section provides the smallest number among such ones under several assumptions.
First of all, we state the definitions of notions used in this paper.

\begin{dfn}
Let $R$ be a ring and $M$ a finitely $R$-module.
We denote by $\mod R$ the category of finitely generated $R$-modules
For every integer $n\ge 0$, we denote by $\ca^n(R)$ the ideal consisting of elements $a$ such that $a\Ext_R^n(M,N)=0$ for all $M,N\in\mod R$. 
The union $\bigcup_{n\ge 0}\ca^n(R)$ is called \textit{cohomology annihilator} of $R$, which is denoted by $\ca(R)$.
The \textit{singular locus} $\sing(R)$ of $R$ is defined as the set of prime ideals $\p$ of $R$ such that $R_\p$ is not a regular local ring and the non-Gorenstein locus $\NG R$ of $R$ is defined as the set of prime ideals $\p$ of $R$ such that $R_\p$ is not Gorenstein.
The non-free locus $\NF_R(M)$ of $M$ is the set of prime ideals of $R$ such that $M_\p$ is not a free $R_\p$-module.
For each ideal $I$ of $R$, the set of prime ideals of $R$ which contain $I$ is denoted by $\V(I)$.
The completion of $R$ is denoted by $\widehat{R}$ when $R$ is local.
\end{dfn}

A similar argument to the proof of \cite[Proposition 2.4(1)]{K} shows the lemma below.

\begin{lem}\label{2.4(1)ofK}
Let $R$ be a local ring.
Then there are equalities $\sqrt{\smash[b]{\ca(R)}}=\sqrt{\smash[b]{\ca(\widehat{R})\cap R}}$ and $\sqrt{\smash[b]{\ca^{n}(R)}}=\sqrt{\smash[b]{\ca^{n}(\widehat{R})\cap R}}$ for all integers $n\ge 0$.
\end{lem}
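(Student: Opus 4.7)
The plan is to prove the second equality on $\ca^{n}(R)$ for all $n\ge 0$; the first equality, for $\ca(R)$, then follows by combining the second with the stability of the ascending chains $\{\sqrt{\ca^{n}(R)}\}_{n\ge 0}$ and $\{\sqrt{\ca^{n}(\widehat R)}\}_{n\ge 0}$ (both guaranteed by Noetherianity).

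For the inclusion $\sqrt{\ca^{n}(\widehat{R})\cap R}\subseteq\sqrt{\ca^{n}(R)}$, I would establish the stronger ideal inclusion $\ca^{n}(\widehat{R})\cap R\subseteq\ca^{n}(R)$ directly. Given $a\in\ca^{n}(\widehat{R})\cap R$ and arbitrary $M,N\in\mod R$, the flat base change isomorphism
\[
\Ext^{n}_{\widehat{R}}(\widehat{R}\otimes_R M,\,\widehat{R}\otimes_R N)\cong\Ext^{n}_R(M,N)\otimes_R\widehat{R},
\]
valid because $\widehat{R}$ is flat over $R$ and $M$ is finitely presented, shows that $a$ annihilates the right-hand side. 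Faithful flatness of $R\to\widehat{R}$ then forces $a\cdot\Ext^{n}_R(M,N)=0$, so $a\in\ca^{n}(R)$.

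For the reverse inclusion, the goal is to show that, for any $a\in\ca^{n}(R)$, some power $a^k$ lies in $\ca^{n}(\widehat{R})$ (whence $a^k\in\ca^{n}(\widehat R)\cap R$). My plan is to first exploit the one-sided adjunction
\[
\Ext^{n}_{\widehat{R}}(\widehat{R}\otimes_R M,\,N')\cong\Ext^{n}_R(M,N')
\]
for $M\in\mod R$ and any $N'\in\mod\widehat{R}$ viewed as an $R$-module, combined with the fact that $\Ext^{n}_R(M,-)$ commutes with filtered colimits of $R$-modules for finitely presented $M$. This already shows that $a$ kills $\Ext^{n}_{\widehat{R}}(\widehat{R}\otimes_R M,N')$ for every $M\in\mod R$ and every $N'\in\mod\widehat{R}$, i.e.\ $a$ controls Ext whenever the first argument is extended from $R$. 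To handle a general first argument $M'\in\mod\widehat{R}$, I would choose a free $\widehat{R}$-presentation $\widehat{R}^{r_1}\to\widehat{R}^{r_0}\to M'\to 0$---whose terms $\widehat R^{r_i}=\widehat R\otimes_R R^{r_i}$ are extended from $R$---and use the resulting long exact sequence in Ext to reduce $\Ext^{n}_{\widehat{R}}(M',N')$ to the Ext of extended modules together with Ext of the syzygy module at lower cohomological degree, iterating inside a finite free resolution.

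The main obstacle is that $a\in\ca^{n}(R)$ controls only the $n$-th Ext, whereas the syzygy reduction shifts cohomological degree. I expect the technique of \cite[Proposition 2.4(1)]{K} handles this by carefully tracking how many copies of $a$ are picked up at each step of the d\'{e}vissage, producing a uniform exponent $k$ that depends only on $n$ and not on $M',N'$; alternatively, one may bypass the issue by reducing $\sqrt{\ca^{n}(\widehat R)}$ to the annihilator (up to radicals) of Ext of a single representative test module whose completion behavior is controlled by faithfully flat descent of annihilator ideals, namely $\Ann_R(K)\cdot\widehat R=\Ann_{\widehat R}(K\otimes_R\widehat R)$ for finitely generated $K$.
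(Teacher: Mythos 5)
Your easy inclusion is fine: the flat base change isomorphism plus faithful flatness does give $\ca^{n}(\widehat{R})\cap R\subseteq\ca^{n}(R)$, and your colimit argument correctly shows that any $a\in\ca^{n}(R)$ kills $\Ext^{n}_{\widehat{R}}(\widehat{R}\otimes_{R}M,N')$ for $M\in\mod R$ and arbitrary $N'\in\mod\widehat{R}$. But the whole content of the lemma is the reverse inclusion, and there your proposal has a genuine gap. (The paper itself writes no proof, deferring to the argument of \cite[Proposition 2.4(1)]{K}, so that unseen technique is precisely the missing substance.) The d\'evissage you propose cannot close it: presenting a general $M'\in\mod\widehat{R}$ by free (hence extended) $\widehat{R}$-modules only gives $\Ext^{n}_{\widehat{R}}(M',N')\cong\Ext^{n-1}_{\widehat{R}}(\Omega M',N')$ for $n\ge 2$, i.e.\ it pushes the problem to a \emph{lower} cohomological degree while the syzygy $\Omega M'$ is again a general, non-extended $\widehat{R}$-module. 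Since $a\in\ca^{n}(R)$ yields information only in degrees at least $n$ (and, via your step, only when the first argument is extended), no additional factor of $a$ is collected at any stage of this reduction; ``carefully tracking how many copies of $a$ are picked up'' therefore tracks zero copies, and after finitely many steps you are left with a $\Hom$ or low-degree $\Ext$ of uncontrolled modules. Moreover $M'$ need not admit a finite free resolution, so ``iterating inside a finite free resolution'' is not available.

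The fallback you sketch does not repair this. Descent of annihilators, $\Ann_{R}(K)\widehat{R}=\Ann_{\widehat{R}}(K\otimes_{R}\widehat{R})$, applies only to modules extended from $R$, which is exactly the case you have already handled, and there is no single finitely generated test module whose $\Ext^{n}$-annihilator computes $\ca^{n}(\widehat{R})$ up to radical in this generality. The real difficulty is to transfer annihilation from $\mod R$ to $\mod\widehat{R}$ even though a finitely generated $\widehat{R}$-module is in general neither extended from $R$ nor a direct summand of an extended module; some further idea is required (for instance, characterizing elements of $\ca^{n}$ by the factorization of the multiplication map on $(n-1)$-st syzygies through free modules and then lifting/approximating such factorizations over the complete ring), and none is supplied. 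As it stands, the inclusion $\sqrt{\smash[b]{\ca^{n}(R)}}\subseteq\sqrt{\smash[b]{\ca^{n}(\widehat{R})\cap R}}$ is not proved in your proposal.
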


We prepare a proposition concerning the non-Gorenstein locus, which plays an essential role in the proof of Theorem 2.4.
It is well known that the non-Gorenstein locus is a closed set if the ring has a dualizing complex (or equivalently, if it is a homomorphic image of a finite dimensional Gorenstein ring).
The following proposition characterizes that closed set in terms of the non-free locus of a finitely generated module and an annihilator ideal.

\begin{prop}\label{NGlocus}
Let $R$ be a ring of dimension $d$ with dualizing complex $D=(\cdots\to 0\to D^0\to D^1\to\cdots\to D^d\to 0\to\cdots)$ and let $P=(\cdots\to P^{i-1}\xrightarrow{d^{i-1}}P^i\xrightarrow{d^i}P^{i+1}\to\cdots)$ be a complex of finitely generated projective $R$-modules such that $P^i=0$ for all $i\gg 0$ and $H^j(P)\cong H^j(D)$ for every integer $j$.
Then $\NG R=\NF_R(\cok d^{-1})=\V(\bigcap_{M\in\mod R}\Ann\Ext_R^{d+1}(M,R))$.
\end{prop}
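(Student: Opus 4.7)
The plan is to establish the chain $\NG R = \NF_R(N) = \V(I)$, where $N = \cok d^{-1}$ and $I = \bigcap_{M \in \mod R} \Ann \Ext^{d+1}_R(M, R)$, using the first equality as a bridge for the second.

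For $\NG R = \NF_R(N)$, I would exploit that $H^j(P) = H^j(D) = 0$ for $j < 0$, so the canonical truncation $C := \tau^{\ge 0} P = (0 \to N \to P^1 \to \cdots \to P^d \to 0)$ is quasi-isomorphic to $D$. Localizing at a prime $\p$: if $N_\p$ is free, then $C_\p$ is a bounded complex of finitely generated projective $R_\p$-modules, so $D_\p$ is perfect. From $R_\p \simeq \RHom_{R_\p}(D_\p, D_\p)$ and the tensor--Hom adjunction
\[
\RHom_{R_\p}(M, R_\p) \simeq \RHom_{R_\p}(M \otimes^{\mathbf L}_{R_\p} D_\p, D_\p),
\]
noting $M \otimes^{\mathbf L}_{R_\p} D_\p \simeq M \otimes_{R_\p} C_\p$ has cohomology in $[0,d]$ and $D_\p$ has injective dimension at most $d$, the hyperExt spectral sequence forces $\Ext^n_{R_\p}(M, R_\p) = 0$ for $n > d$, so $R_\p$ is Gorenstein. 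Conversely, $R_\p$ Gorenstein gives $D_\p \simeq R_\p[-n]$ for some $n \in \{0, \dots, d\}$, so $C_\p$ has $R_\p$ as its unique nonzero cohomology in degree $n$; iterated splitting of short exact sequences (showing $\image d^i_\p$ is projective for $i \ge n$ from the right end, splitting $0 \to \image d^{n-1}_\p \to \ker d^n_\p \to R_\p \to 0$ via the freeness of $R_\p$, and propagating projectivity leftward) then yields that $N_\p$ is free.

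For $\V(I) = \NG R$, the inclusion $\V(I) \supseteq \NG R$ follows from Bass's theorem on consecutive positivity of Bass numbers: for $\p \in \NG R$, $R_\p$ is non-Gorenstein with $\depth R_\p \le \dim R_\p \le d$, so $\mu^{d+1}(R_\p) > 0$, giving $\Ext^{d+1}_R(R/\p, R)_\p \ne 0$ and hence $\Ann \Ext^{d+1}_R(R/\p, R) \subseteq \p$, so $I \subseteq \p$. For the reverse inclusion, using that $\NG R = \NF_R(N)$ is closed, the strategy is to produce for each $\p$ with $R_\p$ Gorenstein a uniform annihilator in $I$ outside $\p$: by openness of the Gorenstein locus, choose $s \notin \p$ such that $N$ is free on $D(s)$; then $C_s$ is a bounded complex of finitely generated projective $R_s$-modules, and applying the spectral sequence of the first step over $R_s$ yields a power $s^N$ (with exponent controlled by the filtration length $d+1$) that annihilates $\Ext^{d+1}_R(M, R)$ simultaneously for every $M \in \mod R$, producing $s^N \in I \setminus \p$.

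The principal obstacle is twofold: in Step 1, the hyperExt spectral sequence must be set up carefully, combining the perfectness of $D_\p$ (from freeness of $N_\p$) with its finite injective dimension (from dualizing) and tracking the two cohomological ranges to force vanishing above degree $d$; and in Step 2, extracting a single uniform annihilator is delicate because an infinite intersection of annihilators can behave pathologically, so one must exploit the bounded length of the spectral sequence filtration rather than rely on module-dependent powers $s^{n_M}$. The remaining ingredients — the iterated splitting, the closedness of the non-free locus of a finitely generated module, and the Bass-number computation — are standard once these two anchors are in place.
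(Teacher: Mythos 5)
Your Step 1 is essentially correct: if $N_\p$ is free then $C_\p$ is a bounded complex of finitely generated projectives representing $D_\p$, and $\Hom_{R_\p}(M\otimes_{R_\p}C_\p, D_\p)$ vanishes in degrees $>d$ because $M\otimes_{R_\p}C_\p$ is concentrated in degrees $\ge 0$ while $D_\p$ is a complex of injectives in degrees $\le d$; conversely your splitting argument with $D_\p\simeq R_\p[-n]$, $n\in[0,d]$, does give freeness of $N_\p$ (and handling a general shift $n$ rather than $n=0$ is the right level of care). The inclusion $\NG R\subseteq \V(I)$ via positivity of all Bass numbers $\mu^i(R_\p)$ for $i\ge\depth R_\p$ of a non-Gorenstein local ring is also fine, though this is the Fossum--Foxby--Griffith--Reiten/Roberts theorem rather than Bass's (it applies since $R_\p$ inherits a dualizing complex). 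Two cosmetic points: $P$ need not vanish above degree $d$, only in degrees $\gg 0$, and the non-free locus of $N$ being closed is what you in fact use for openness of the Gorenstein locus.

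The genuine gap is in $\V(I)\subseteq\NG R$. Choosing $s\notin\p$ with $N_s$ free only gives $\Ext_R^{d+1}(M,R)_s=\Ext_{R_s}^{d+1}(M_s,R_s)=0$, i.e.\ for each $M$ some power $s^{n_M}$ kills $\Ext_R^{d+1}(M,R)$, with exponent depending on $M$. Your proposed remedy, that the exponent is ``controlled by the filtration length $d+1$'' of the spectral sequence of Step 1 run over $R_s$, does not work: that spectral sequence lives entirely over $R_s$, where everything has already been localized, so no power of $s$ appears in it; and if instead you compare $M\otimes^{\mathbf{L}}_R D$ with $M\otimes_R C$ over $R$ itself, the error terms are governed by $\operatorname{Tor}^R_{\ge 1}(M,N)$, which are merely $s$-power-torsion with $M$-dependent exponent, so a filtration of bounded length does not produce a uniform annihilator. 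What is missing is a device killing these error terms simultaneously for all $M$, and this is exactly what the paper supplies: an element $a$ such that multiplication by $a$ on $N$ factors through a finite free module (any $a\in\Ann\Ext_R^1(N,\Omega N)$ works by the cited lemma of Dey--Takahashi; alternatively, since $N_s$ is free, clearing denominators in an isomorphism $N_s\cong R_s^{\oplus r}$ shows some $s^N\cdot\mathrm{id}_N$ factors through $R^{\oplus r}$). One then uses the triangle $G\to P\to N\rightsquigarrow$ coming from the brutal truncation $G=\sigma^{\ge1}P$, the identification $\RHom_R(P,D)\simeq\RHom_R(D,D)\simeq R$, and the degree-reason vanishing $H^{d+1}(\RHom_R(M,\RHom_R(G,D)))=0$, so that the factorization transfers annihilation functorially to $\Ext_R^{d+1}(M,R)$ uniformly in $M$, giving $a\in I\setminus\p$. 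Without this factorization-through-a-free-module step (or an equivalent uniform mechanism), your Step 2 does not close.
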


\begin{proof}
We put $C=\cok d^{-1}$ and $I=\bigcap_{M\in\mod R}\Ann\Ext_R^{d+1}(M,R)$.
Let $\p$ be a prime ideal of $R$.
If $\p$ does not contain $I$, then it also does not contain $\Ann\Ext_R^{d+1}(R/\p,R)$.
We see that $\Ext_R^{d+1}(R/\p,R)_\p=0$ and that $R_\p$ is Gorenstein.
We obtain $\NG R\subset \V(I)$.
Suppose that $R_\p$ is Gorenstein.
Then $H^0(P_\p)\cong H^0(D_\p)\cong R_\p$ and $H^j(P_\p)\cong H^j(D_\p)=0$ for any $j\ne 0$.
For any $i\ge 0$, there exists a short exact sequence $0\to \ker d^i_\p \to P^i_\p \to \ker d^{i+1}_\p\to 0$ as $\image d^i_\p\cong\ker d^{i+1}_\p$.
For some integer $n>d$, $P^n=0$ and hence $\ker d^{n-1} \cong P_{n-1}$.
By induction on $i$, $\ker d^i_\p$ is a free $R_\p$-module for all $0<i<n$.
In particular, $\image d^0_\p\cong\ker d^1_\p$ is free.
The natural short exact sequence $0\to H^0(P_\p) \to \cok d^{-1}_\p \to \image d^0_\p\to 0$ says that $C_\p\cong\cok d^{-1}_\p$ is free.
Therefore we get $\NF_R(C)\subset \NG R$.

Finally, we prove $\V(I)\subset\NF_R(C)$. 
Fix $M\in\mod R$.
As $\V(\Ann\Ext_R^1(C,\Omega C))=\NF_R(C)$, we have only to show that $\Ann\Ext_R^1(C,\Omega C)\subset \Ann\Ext_R^{d+1}(M,R)$.
There is a short exact sequence 
\[
  \xymatrix@C=20pt@R=20pt
  {
    G := ( 
    \cdots \ar[r]
    & 0 \ar[r] \ar[d]
    & 0 \ar[r] \ar[d]
    & P^1 \ar[r]^{d^1} \ar@{=}[d]
    & \cdots \ar[r]^{d^i}
    & P^i \ar[r]^{d^{i+1}} \ar@{=}[d] 
    & \cdots )\\
    F = (\cdots \ar[r]
    & P^{-1}\ar[r]^{d^{-1}} \ar@{=}[d]
    & P^0 \ar[r]^{d^0} \ar@{=}[d]
    & P^1 \ar[r]^{d^1} \ar[d]
    & \cdots \ar[r]^{d^i}
    & P^i \ar[r]^{d^{i+1}} \ar[d] 
    & \cdots )\\
    H := (\cdots \ar[r]
    & P^{-1}\ar[r]^{d^{-1}}
    & P^0 \ar[r]^{d^0}
    & 0 \ar[r]
    & \cdots \ar[r]
    & 0 \ar[r]
    & \cdots )\\
  }
\]
of complexes and the natural quasi-isomorphism $H\to C$ since $H^j(P)\cong H^j(D)=0$ for all $j<0$.
Applying the derived functor $\RHom_R(M,\RHom_R(-,D))$ to the exact triangle $G\to F\to C\rightsquigarrow$ in the (bounded) derived category, we have an exact triangle 
$$
\RHom_R(M,\RHom_R(C,D))\to\RHom_R(M,\RHom_R(F,D))\to\RHom_R(M,\RHom_R(G,D))\rightsquigarrow,
$$
which induces the exact sequence 
\begin{equation}\label{d+1 exact}
H^{d+1}(\RHom_R(M,\RHom_R(C,D)))\to \Ext_R^{d+1}(M,R)\to H^{d+1}(\RHom_R(M,\RHom_R(G,D))) \tag{\ref{NGlocus}.1}
\end{equation}
of $R$-modules via the quasi-isomorphism $\RHom_R(F,D)\simeq\RHom_R(D,D)\simeq R$.
Noting that $G$ is a bounded complex of finitely generated projective $R$-modules and $D$ is a bounded complex of injective $R$-modules, $\RHom_R(M,\RHom_R(G,D))$ is quasi-isomorphic to $\Hom_R(M,\Hom_R(G,D))$.
For any integer $i$, if $i\le 0$, then $G^i=0$; otherwise, $D^{i+d+1}=0$.
Hence $\Hom_R(M,\Hom_R(G,D))^{d+1}=\Hom_R(M,\Hom_R(G,D)^{d+1})=0$, which means $H^{d+1}(\RHom_R(M,\RHom_R(G,D)))=0$.
By (\ref{d+1 exact}), we have $\Ann H^{d+1}(\RHom_R(M,\RHom_R(C,D)))\subset\Ann\Ext_R^{d+1}(M,R)$.

Let $a\in\Ann\Ext_R^1(C,\Omega C)$. 
It follows from \cite[Lemma 3.8]{DeyT} that the multiplication by $a$ on $C$ factors through some free module $R^{\oplus m}$.
Applying $H^{d+1}(\RHom_R(M,\RHom_R(-,D)))$, the multiplication by $a$ on $H^{d+1}(\RHom_R(M,\RHom_R(C,D)))$ factors through $H^{d+1}(\RHom_R(M,D))^{\oplus m}$.
Similarly, $\RHom_R(M,D)$ is quasi-isomorphic to $\Hom_R(M,D)$ and $\Hom_R(M,D)^{d+1}=\Hom_R(M,D^{d+1})=0$.
So $a\in\Ann H^{d+1}(\RHom_R(M,\RHom_R(C,D)))\subset\Ann\Ext_R^{d+1}(M,R)$ as $H^{d+1}(\RHom_R(M,D))^{\oplus m}=0$.
\end{proof}

In general, for any complex $X=(\cdots\to X^i\to X^{i+1}\to\cdots)$ of $R$-modules such that $H^i(X)$ are finitely generated over $R$ and $H^j(X)=0$ for all integers $i$ and $j\gg 0$, there is a complex $P$ of finitely generated projective $R$-modules and a quasi-isomorphism $P\to X$ such that ${\rm sup}\{i\mid P^i\ne 0\}={\rm sup}\{i\mid H^i(X)\ne 0\}$; see \cite[Theorem (A.3.2)(L)]{C} for instance.
So, the result below is a direct corollary of Proposition \ref{NGlocus}.

\begin{cor}\label{NGlocus cor}
Let $R$ be a ring of dimension $d$ with dualizing complex.
Then 
$$\NG R=\V\Bigl(\bigcap\nolimits_{M\in\mod R}\Ann\Ext_R^{d+1}(M,R)\Bigr).$$
\end{cor}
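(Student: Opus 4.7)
The plan is to reduce Corollary \ref{NGlocus cor} directly to Proposition \ref{NGlocus} by supplying a complex $P$ that fits the hypotheses of that proposition. Since Proposition \ref{NGlocus} has already identified $\NG R$ with $\V\bigl(\bigcap_{M\in\mod R}\Ann\Ext_R^{d+1}(M,R)\bigr)$ whenever one has a bounded-above complex $P$ of finitely generated projective $R$-modules that is quasi-isomorphic to $D$, the entire task is to produce such a $P$.

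To do this I would invoke the general existence result cited in the paragraph preceding the corollary, namely \cite[Theorem (A.3.2)(L)]{C}, with $X=D$. By hypothesis $D$ is a dualizing complex concentrated in degrees $0,\dots,d$, so its cohomology modules are finitely generated and vanish outside a bounded range; the cited theorem then yields a complex $P$ of finitely generated projective $R$-modules together with a quasi-isomorphism $P\to D$ such that $P^i=0$ for all $i\gg 0$. In particular one obtains $H^j(P)\cong H^j(D)$ for every integer $j$, which is precisely the hypothesis imposed on $P$ in Proposition \ref{NGlocus}.

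With $P$ thus constructed, Proposition \ref{NGlocus} applies verbatim and gives the desired equality $\NG R=\V\bigl(\bigcap_{M\in\mod R}\Ann\Ext_R^{d+1}(M,R)\bigr)$. I do not anticipate any genuine obstacle here: the only non-trivial input, namely the identification of $\NG R$ with the annihilator-theoretic closed set in terms of $\cok d^{-1}$, has already been handled; the corollary merely packages Proposition \ref{NGlocus} in a form that no longer references the auxiliary complex $P$, using the standard fact that every dualizing complex admits a bounded-above finitely generated projective resolution.
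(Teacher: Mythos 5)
Your proposal is correct and follows essentially the same route as the paper: the paper's proof consists precisely of invoking \cite[Theorem (A.3.2)(L)]{C} to produce a bounded-above complex $P$ of finitely generated projective modules quasi-isomorphic to $D$, and then applying Proposition \ref{NGlocus}. No gaps.
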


Suppose that $R$ is a $d$-dimensional Cohen--Macaulay local ring with canonical module $\omega$.
One has the equality $\NG R=\NF_R(\omega)$.
In this case, Corollary \ref{NGlocus cor} follows immediately from \cite[Theorem 2.3]{DKT} because it says that $\Ann\Ext_R^1(\omega,\Omega \omega)=\bigcap_{M\in\mod R}\Ann\Ext_R^{d+1}(M,R)$ holds.
On the other hand, Proposition \ref{NGlocus} can also be viewed as a non-Cohen--Macaulay version of \cite[Theorem 2.3]{DKT}.
Indeed, with the notation of the proof of Proposition \ref{NGlocus}, when $R$ is Cohen--Macaulay, $\image d^i\cong\ker d^{i+1}$ are projective for all $i\ge 0$ and up to a projective summand $\image d^0$, $C$ is isomorphic to $H^0(P)$, which is a canonical module of $R$.

The main result of this section is the following theorem.
Let $R$ be a finite dimensional ring and let $n>\dim R$ be an integer.
Theorem \ref{V(d+1)} asserts that the radical of $\ca(R)$ is equal that of $\ca^n(R)$ if either $R$ admits a dualizing complex or $R$ is local.

\begin{thm}\label{V(d+1)}
Let $R$ be a ring of dimension $d$.
Suppose either that $R$ admits a dualizing complex or that $R$ is local.
Then $\V(\ca(R))=\V(\ca^{d+1}(R))$ holds.
\end{thm}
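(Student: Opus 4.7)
The inclusion $\V(\ca(R))\subseteq \V(\ca^{d+1}(R))$ is immediate from $\ca^{d+1}(R)\subseteq \ca(R)$, so the plan is to establish the reverse.

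First, I would reduce to the case that $R$ admits a dualizing complex. If $R$ is merely local, Lemma \ref{2.4(1)ofK} combined with the identity $\sqrt{\pi^{-1}(J)}=\pi^{-1}(\sqrt{J})$ for the flat inclusion $\pi:R\hookrightarrow\widehat R$ transports $\sqrt{\ca(\widehat R)}=\sqrt{\ca^{d+1}(\widehat R)}$ back to the analogous equality in $R$. Since $\dim\widehat R=d$ and $\widehat R$ admits a dualizing complex, this reduces the problem to the dualizing-complex case.

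The key technical step is the containment
\[
I\cdot \ca^{n+1}(R)\subseteq \ca^n(R)\qquad(n\ge d+1),
\]
where $I=\bigcap_{M\in\mod R}\Ann\Ext_R^{d+1}(M,R)$ is the ideal appearing in Proposition \ref{NGlocus}. The proof applies $\Ext_R(M,-)$ to a presentation $0\to N'\to R^{\oplus b}\to N\to 0$, producing the exact sequence $\Ext_R^n(M,R)^{\oplus b}\to\Ext_R^n(M,N)\to\Ext_R^{n+1}(M,N')$. An element $c\in I$ kills the first term (via the dimension shift $\Ext_R^n(M,R)\cong\Ext_R^{d+1}(\Omega^{n-d-1}M,R)$ for $n\ge d+1$), while any $c'\in\ca^{n+1}(R)$ kills the third; chasing the sequence shows $cc'$ annihilates $\Ext_R^n(M,N)$.

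With this in hand, I would prove $\V(\ca^n(R))=\V(\ca^{n+1}(R))$ for every $n\ge d+1$ by induction; combining these with $\V(\ca(R))=\bigcap_n\V(\ca^n(R))$ then yields the theorem. The inclusion $\V(\ca^{n+1}(R))\subseteq\V(\ca^n(R))$ is automatic from $\ca^n(R)\subseteq\ca^{n+1}(R)$. For the reverse, fix $\p$ with $\ca^{n+1}(R)\not\subseteq\p$ and choose $b\in\ca^{n+1}(R)\setminus\p$. Localization yields $\Ext_{R_\p}^{n+1}(k(\p),k(\p))=0$, which by Auslander--Buchsbaum--Serre forces $R_\p$ to be regular, hence Gorenstein, so $\p\notin\NG R=\V(I)$ by Proposition \ref{NGlocus}. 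Picking $a\in I\setminus\p$, the key containment gives $ab\in I\cdot\ca^{n+1}(R)\subseteq\ca^n(R)$ with $ab\notin\p$, so $\ca^n(R)\not\subseteq\p$. The main obstacle is isolating and proving the key containment; once it is in place, everything else follows cleanly from Proposition \ref{NGlocus} and the regularity criterion for local rings.
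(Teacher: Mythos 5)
Your proposal is correct and takes essentially the same route as the paper: the same reduction to the dualizing-complex case via Lemma \ref{2.4(1)ofK} and the completion, the same ideal $I=\bigcap_{M\in\mod R}\Ann\Ext_R^{d+1}(M,R)$ controlled by Proposition \ref{NGlocus}, and the same descent containment $I\cdot\ca^{n+1}(R)\subseteq\ca^{n}(R)$ for $n\ge d+1$, proved by applying $\Ext_R(M,-)$ to $0\to\Omega N\to R^{\oplus b}\to N\to 0$ together with dimension shifting. The only divergence is the endgame: the paper picks $n$ with $\ca^{n+1}(R)=\ca(R)$, uses $\V(I)=\NG R\subseteq\sing R\subseteq\V(\ca(R))$ (via \cite{IT}) to get $\ca(R)^{l}\subseteq I$ and iterates to the explicit power containment $\ca(R)^{l(n-d)+1}\subseteq\ca^{d+1}(R)$, whereas you argue prime-by-prime (using $\Ext_{R_\p}^{n+1}(\kappa(\p),\kappa(\p))=0\Rightarrow R_\p$ regular, hence Gorenstein, hence $\p\notin\V(I)$), which avoids the stabilization and the cited inclusion but yields only the set-theoretic equality---which is all the theorem asserts.
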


\begin{proof}
First, we deal with the case where $R$ admits a dualizing complex.
Since $\ca^{d+1}(R)\subset\ca(R)$, it suffices to prove that $\ca(R)\subset\sqrt{\ca^{d+1}(R)}$.
Put $I=\bigcap_{M\in\mod R}\Ann\Ext_R^{d+1}(M,R)$ and take $n\ge d$ such that $\ca^{n+1}(R)=\ca(R)$.
It follows from Corollary \ref{NGlocus cor} and \cite[Lemma 2.10(2)]{IT} that $\V(I)=\NG R\subset\sing R\subset\V(\ca(R))$. 
Thus $I$ contains $\ca(R)^l$ for some $l>0$.
Let $M,N$ be finitely generated $R$-modules.
A short exact sequence $0\to \Omega N\to R^{\oplus m}\to N\to 0$ induces a long exact sequence 
$$
\cdots \to \Ext_R^i(M,R^{\oplus m})\to\Ext_R^i(M,N)\to \Ext_R^{i+1}(M,\Omega N) \to \cdots.
$$
This means that $I\cdot\ca^{i+1}(R)\subset\ca^i(R)$ for every $i>d$.
We obtain $I^{n-d}\cdot\ca^{n+1}(R)\subset\ca^{d+1}(R)$ and hence $\ca(R)^{l(n-d)+1}\subset\ca^{d+1}(R)$.

Next, We handle the case where $R$ is local.
The equality $\sqrt{\smash[b]{\ca(\widehat{R})}}=\sqrt{\smash[b]{\ca^{d+1}(\widehat{R})}}$ holds since $\widehat{R}$ admits a dualizing complex.
The assertion follows from Lemma \ref{2.4(1)ofK}.
\end{proof}

The cohomology annihilator is defining ideal of the singular locus under several assumptions.
Corollary \ref{SingV=(d+1)} plays an important role in proving Theorem \ref{main equidim}, which is one of the main results of this paper.

\begin{cor}\label{SingV=(d+1)}
Let $R$ be a quasi-excellent ring of dimension $d$.
Suppose either that $R$ admits a dualizing complex or that $R$ is local.
Then $\sing R=\V(\ca^{d+1}(R))$ holds.
\end{cor}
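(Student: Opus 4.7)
The plan is to deduce the corollary directly from Theorem \ref{V(d+1)} together with the known fact, attributed in the introduction to \cite{DLT, IT}, that the cohomology annihilator cuts out the singular locus set-theoretically for quasi-excellent rings of the kind considered here. Since Theorem \ref{V(d+1)} already gives $\V(\ca(R))=\V(\ca^{d+1}(R))$, the corollary reduces to proving $\sing R=\V(\ca(R))$ under the stated hypotheses.

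First I would dispose of the inclusion $\V(\ca(R))\subseteq\sing R$ by appealing to \cite[Lemma 2.10(2)]{IT}, which was already invoked in the proof of Theorem \ref{V(d+1)}: if $R_\p$ is regular, then $\Ext_{R_\p}^n(M_\p,N_\p)=0$ for all $n\gg 0$, so $\p\not\supseteq\ca(R)$. For the reverse inclusion in the case that $R$ admits a dualizing complex, I would invoke the quasi-excellent version of the Iyengar--Takahashi singular-locus theorem supplied by \cite{DLT, IT}; this is exactly the content to which the introduction refers when it attributes the latter half of Theorem \ref{main1 V(d+1)} to those sources.

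In the local case, the plan is a descent through completion. The completion $\widehat R$ is complete local, hence admits a dualizing complex, and is quasi-excellent, so the dualizing-complex case applied to $\widehat R$ gives $\sing\widehat R=\V(\ca(\widehat R))$. Quasi-excellence of $R$ means that $R\to\widehat R$ has geometrically regular fibers; hence for any $\p\in\sing R$ and any prime $\q$ of $\widehat R$ lying over $\p$ (which exists by faithful flatness), $\widehat R_\q$ is non-regular, so $\q\supseteq\ca(\widehat R)$, and therefore $\p\supseteq\ca(\widehat R)\cap R$. Lemma \ref{2.4(1)ofK} then yields $\sqrt{\ca(R)}=\sqrt{\ca(\widehat R)\cap R}\subseteq\p$, completing the descent. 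The only real obstacle is bookkeeping: making sure the cited results from \cite{DLT, IT} are stated in sufficient generality to apply to an arbitrary quasi-excellent ring admitting a dualizing complex. No new ideas beyond Theorem \ref{V(d+1)} and the cited singular-locus theorem are required.
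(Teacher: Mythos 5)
Your overall strategy (reduce to $\sing R=\V(\ca(R))$ and apply Theorem \ref{V(d+1)}) matches the paper, but you have the two inclusions' difficulty backwards, and this leaves a genuine gap. The cited \cite[Lemma 2.10(2)]{IT} gives the inclusion $\sing R\subseteq\V(\ca(R))$ (valid for any Noetherian ring), not $\V(\ca(R))\subseteq\sing R$. Your justification for the latter inclusion --- ``if $R_\p$ is regular then $\Ext_{R_\p}^n(M_\p,N_\p)=0$ for $n\gg0$, so $\p\not\supseteq\ca(R)$'' --- is a non sequitur: to conclude $\ca(R)\nsubseteq\p$ you need a \emph{single} element of $R\setminus\p$ annihilating $\Ext_R^n(M,N)$ for \emph{all} finitely generated $M,N$ simultaneously, and local vanishing of each Ext only yields an annihilator outside $\p$ depending on $(M,N)$. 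Producing such a uniform annihilator is exactly the hard content of \cite[Corollary C]{DLT} and \cite[Theorem 1.1]{IT}, and it is where quasi-excellence enters; it cannot be replaced by a localization argument.

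Because of this swap, your dualizing-complex case survives only because you also invoke the full equality from \cite{DLT,IT}, but your local case does not: the completion descent you describe ($\p\in\sing R$, lift to $\q$ over $\p$, use flat descent of regularity, then Lemma \ref{2.4(1)ofK}) establishes only $\sing R\subseteq\V(\ca(R))$ --- the easy inclusion again --- while the inclusion $\V(\ca(R))\subseteq\sing R$ for a quasi-excellent local ring rests entirely on the invalid step above and so is unproven. The paper avoids all of this by simply quoting \cite[Corollary C]{DLT} and \cite[Theorem 1.1]{IT} for the equality $\sing R=\V(\ca(R))$ in both cases and then applying Theorem \ref{V(d+1)}. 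If you want a genuine descent argument in the local case, it must go in the other direction: given $\ca(R)\subseteq\p$, use Lemma \ref{2.4(1)ofK} to find a prime $\q\supseteq\ca(\widehat R)$ with $\q\cap R\subseteq\p$, use the equality for $\widehat R$ to see $\widehat R_\q$ is not regular, use geometric regularity of the formal fibers (quasi-excellence) to conclude $R_{\q\cap R}$ is not regular, and then use that $\sing R$ is specialization-closed to get $\p\in\sing R$; as written, your proposal contains none of this.
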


\begin{proof}
It follows from \cite[Corollary C]{DLT} and \cite[Theorem 1.1]{IT} that $\sing R=\V(\ca(R))$ holds.
One has the equality $\sing R=\V(\ca^{d+1}(R))$ by Theorem \ref{V(d+1)}.
\end{proof}

Theorem \ref{V(d+1)} and Corollary \ref{SingV=(d+1)} improve and recover several existing results.

\begin{rem}\label{cm case}
\noindent {\rm (1)} 
When $R$ is a Cohen--Macaulay local ring with a canonical module $\omega$, Theorem \ref{V(d+1)} can be proved more easily by using existing results instead of Corollary \ref{NGlocus cor}.
Indeed, we put $\ca^{n+1}(R)=\ca(R)$ for some $n\ge d$.
It follows from \cite[Propositon 4.2(2)]{DeyT} (or \cite[Theorem 2.3]{DKT}) that 
$$
(\operatorname{tr}\omega )^n\cdot \bigcap_{X\in\cm R, Y\in\mod R} \Ann\Ext_R^{n+1}(X,\Omega^n Y) \subset \bigcap_{X\in\cm R, Y\in\mod R} \Ann\Ext_R^1(X,Y)
$$
where $\cm R$ is the subcategory of $\mod R$ consisting of maximal Cohen--Macaulay $R$-modules and $\operatorname{tr}\omega$ is the trace ideal of $\omega$; see \cite{DKT, DeyT} for instance.
As $\V(\operatorname{tr}\omega)=\NG R$, similar to the proof of Theorem \ref{V(d+1)}, $\ca(R)^l\subset \operatorname{tr}\omega$ for some $l>0$.
The relations 
\begin{align*}
\ca(R)&=\ca^{n+1}(R)\subset\bigcap_{X\in\cm R, Y\in\mod R} \Ann\Ext_R^{n+1}(X,\Omega^n Y) \quad {\rm and} \\
\bigcap_{X\in\cm R, Y\in\mod R} &\Ann\Ext_R^1(X,Y) \subset \bigcap_{X,Y\in\mod R} \Ann\Ext_R^1(\Omega^d X,Y)=\ca^{d+1}(R)
\end{align*}
deduce $\ca(R)^{ln+1}\subset\ca^{d+1}(R)$, which means $\V(\ca(R))=\V(\ca^{d+1}(R))$.

\noindent {\rm (2)} Let $R$ be as in Theorem \ref{V(d+1)}.
Note that $\V(\ca^d(R))$ does not necessarily equal to $\V(\ca^{d+1}(R))$.
Indeed, if $R$ is either a polynomial ring or a formal power series ring over a field, then $\ca^d(R)=0$ and $\ca^{d+1}(R)=R$.
Moreover, for any ring $R$ of dimension $d$, we see that $\ca^d(R)$ is contained in any prime ideal $\p$ of $R$ such that $\dim R/\p=d$.
Now we prove this claim.
Take a maximal ideal $\m$ of $R$ such that it contains $\p$ and $\height\m=d$.
Since $\ca^d(R)_\m\subseteq\ca^d(R_\m)$, we may assume that $R$ is local.
There is a prime ideal $\q$ of $\widehat{R}$ such that $\dim \widehat{R}/\q=d$ and $\p=\q\cap R$ as $\dim \widehat{R}/\p \widehat{R}=\dim R/\p=d$.
By lemma \ref{2.4(1)ofK}, $\ca^{d}(\widehat{R})\subseteq\q$ implies $\ca^{d}(R)\subseteq\p$ and thus we may assume that $R$ is complete.
There is a Gorenstein local ring $S$ of dimension $d$ such that $R$ is a homomorphic image of $S$.
The inverse image of $\p$ is a minimal prime ideal of $S$ because $\dim R/\p=d=\dim S$.
We obtain $\Hom_S(R,S)_\p\ne 0$ and hence $\Ann_R\Hom_S(R,S)\subseteq\p$.
An analogous argument to the proof of \cite[Proposition 2.6]{K} shows that $\ca^{d}(R)$ is contained in $\Ann_R H_\m^d(R)=\Ann_R \Hom_S(R,S)$.
The proof of the claim is now completed.
In particular, if $R$ is as in Corollary \ref{SingV=(d+1)} and it is reduced, then we have $\sing R=\V(\ca^{d+1}(R))\subsetneq\V(\ca^d(R))$.

\noindent {\rm (3)}  Corollary \ref{SingV=(d+1)} improves \cite[Theorems 5.3 and 5.4]{IT}, that is to say, Corollary \ref{SingV=(d+1)} replaces $2d+1$ with $d+1$.
Indeed, if $R$ is an equicharacteristic excellent local ring of dimension $d$, the equality $\sing R=\V(\ca^{d+1}(R))$ immediately follows from \cite[Theorem 5.3]{IT} and Corollary \ref{SingV=(d+1)} in the case where $R$ is local.
On the other hand, If $R$ is a localization of an affine algebra over a field, it is excellent and a homomorphic image of a finite-dimensional Gorenstein ring.
So, the same equality is a consequence of \cite[Theorem 5.4]{IT} and Corollary \ref{SingV=(d+1)} in the case where the ring admits a dualizing complex.

\noindent {\rm (4)} Suppose that $R$ is either an affine algebra over a field or an equicharacteristic complete local ring, of dimension $d$.
It is well-known by the Jacobian criterion that $\V(J)$ contains $\sing R$ when $R$ is equidimensional, where $J$ is the Jacobian ideal of $R$; see Definition \ref{jac ideal dfn}.
By this and Corollary \ref{SingV=(d+1)}, we have $\V(\ca^{d+1}(R))\subseteq\V(J)$.
Corollary \ref{SingV=(d+1)} deduces a bit weaker version of \cite[Theorem 1.1]{IT2}, which asserts that some power of $J$ annihilates $\Ext_R^{d+1}(M,N)$ for all $R$-modules $M,N$, not necessarily finitely generated.
As mentioned in \cite{L}, it is necessary to assume in \cite[Theorem 1.1]{IT2} that R is equidimensional.
\end{rem}

We close this section by providing an example of a ring $R$ such that $\ca^{d+1}(R)$ is the defining ideal of the singular locus of $R$, where $d=\dim R$.

\begin{ex}
Let $R=k \llbracket X, Y \rrbracket/(X^2, XY)$ be a quotient of a formal power series ring over a field $k$.
The ring $R$ is a complete local ring of dimension 1 that has an isolated singularity.
According to Corollary \ref{SingV=(d+1)}, the equalities $\V((X,Y)R)=\sing R=\V(\ca^2(R))$ should hold.
The equality $\ca^3(R)=(X,Y)R$ was proven in \cite[Example 5.7]{L}.
We see that $\ca^2(R)=(X,Y)R$ by using the method employed there.
Since $XR$ is contained in the socle of $R$, it is also contained in $\ca^1(R)$ by \cite[Example 2.6]{IT}.
We have only to show $YR\subseteq\ca^2(R)$.
For any $m\ge 2$, $R/(X,Y^m)R$ is not a submodule of any free $R$-module.
In fact, if a submodule of a free $R$-module is annihilated by $(X,Y^m)R$, then it is also annihilated by $(X,Y)R$.
Let $L, M\in\mod R$ and let $N$ be a first syzygy of $M$, which is minimal.
We get $XN=0$ as $XR$ is contained in the socle of $R$ and $N$ is minimal.
This means that $N$ is a finitely generated module over $R/XR\cong k \llbracket Y \rrbracket$, which is PID.
Hence $N$ is a finite direct sum of $R/XR$ and $k$ because $R/(X,Y^m)R$ is not a submodule of any free $R$-module for any $m\ge 2$.
We see that $Y\Ext_R^1(k,L)=0$ and the exact sequence $0\to k\cong XR\to R\to R/XR\to 0$ induces an exact sequence  $\Hom_R(k,L)\to \Ext_R^1(R/XR,L)\to \Ext_R^1(R,L)=0$, which implies $Y\Ext_R^1(R/XR,L)=0$.
We obtain $Y\Ext_R^2(M,L)=Y\Ext_R^1(N,L)=0$ and conclude that $YR$ is contained in $\ca^2(R)$.
\end{ex}

\section{Jacobian ideals and singular loci}

In this section, we study the relationship between Jacobian ideals and singular loci for affine algebras over a field and equicharacteristic complete local rings, and characterize the equidimensionality of these rings in those terms.
For the time being, we will focus on defining the notations used in the following sections and providing remarks about them.

\begin{dfn}
Let $R$ be a ring, and $m, n\ge 0$. 

{\rm (1)} Let $A$ be an $m\times n$ matrix over $R$.
We denote by $I_r(A)$ the ideal generated by the $r$-minors of $A$ (i.e., the determinants of the $r \times r$ submatrices) for $1\le r\le {\rm min}\{m,n\}$.
We also set $I_r(A)=0$ for $r>{\rm min}\{m,n\}$ and $I_r(A)=R$ for $r\le 0$.

{\rm (2)} Let $S$ be either the polynomial ring $R[X_1, \ldots,X_m]$ or the formal power series ring $R\llbracket X_1, \ldots,X_m\rrbracket$.
For $f\in S$ and $1\le i\le m$, the partial derivative of $f$ with respect to $X_i$ is denoted by $\partial f/\partial X_i$ (or $\frac{\partial f}{\partial X_i}$).
That is, if $f=\sum_{k=(k_1,\cdots,k_m)} a_k X_1^{k_1}\cdots X_m^{k_m} \in S$, then $\partial f/\partial X_i=\sum_{k} k_i a_k X_1^{k_1}\cdots X_i^{k_i-1}\cdots X_m^{k_m}$, where $a_k\in R$.
For $f_1,\ldots,f_n\in S$, the $m\times n$ matrix
\begin{align*}
\left(\frac{\partial f_j}{\partial X_i}\right)=
\begin{pmatrix}
   \dfrac{\partial f_1}{\partial X_1} & \cdots &  \dfrac{\partial f_n}{\partial X_1} \\
   \vdots & \ddots & \vdots \\
   \dfrac{\partial f_1}{\partial X_m} & \cdots  &  \dfrac{\partial f_n}{\partial X_m} \\
\end{pmatrix}
\end{align*}
is called the \textit{Jacobian matrix} of $f_1,\ldots,f_n$.

{\rm (3)} Let $k$ be a field. 
For an affine $k$-algebra $R$, that is $R$ is a finitely generated algebra over $k$, we define $\codim_{k}(R)={\rm inf}\{m-\dim R \mid$ there is a surjective $k$-algebra homomorphism from $k[X_1, \ldots,X_m]$ to $R\}$.
For an equicharacteristic complete local ring $R$ with residue field $k$, we also define $\codim(R)={\rm inf}\{m-\dim R \mid$ there is a surjective ring homomorphism from $k\llbracket X_1, \ldots,X_m\rrbracket$ to $R\}$.

{\rm (4)} When $R$ is a finite dimensional ring, we denote by $\edd R$ the supremum of $\dim R-\dim R/\p$ over all minimal prime ideals $\p$ of $R$.
(Note that $R$ is equidimensional if and only if $\edd R=0$.)
\end{dfn}

\begin{rem}\label{homeo} 
{\rm (1)} For an affine algebra $R$, $\codim_{k}(R)$ depends on a field $k$. 
Indeed, $\codim_{\mathbb{R}}(\mathbb{C})=1$ and $\codim_{\mathbb{C}}(\mathbb{C})=0$, where $\mathbb{R}$ is the field of real numbers and $\mathbb{C}$ is the field of complex numbers.
On the other hand, when an equicharacteristic complete local ring $R$ is given, $\codim R$ is uniquely determined, and $\codim R+\dim R$ coincides with the embedding dimension of $R$.

{\rm (2)} Let $R$ be a finite dimensional ring, and let $V_1,\ldots,V_n$ be all the maximal irreducible closed subsets of $\spec R$.
Put $v_i={\rm sup}\{m\ge 0\mid$ there exists a chain $V_i=W_0 \supsetneq \cdots \supsetneq W_m$ of irreducible closed subsets of $\spec R\}$.
By definition, we see that $\edd R={\rm sup}\{v_i\mid 1\le i\le n\}-{\rm inf}\{v_i\mid 1\le i\le n\}$.
So, $\edd R$ is characterized in terms of the topology of $\spec R$.
This means that for any ring $S$ such that $\spec S$ is homeomorphic to $\spec R$, one has the equality $\edd S=\edd R$.
\end{rem}

\begin{dfn}\label{jac ideal dfn}
Let $k$ be a field. 

{\rm (1)} Let $R$ be an affine $k$-algebra.
Suppose that $\phi:S=k[X_1, \ldots,X_m] \to R$ is a surjective $k$-algebra homomorphism.
We set $\ker\phi=(f_1,\ldots,f_r)$ and $d=m-\dim R$, where $f_1,\ldots,f_r\in S$.
For each integer $n$, the ideal $\phi(I_{n+d}(\partial f_j/\partial X_i))$ is uniquely determined regardless of the choices of $S$, $\phi$, and $f_1,\ldots,f_r$; see Remark \ref{J is w.d.}.
We denote this ideal by $J_n^k(R)$.
In particular, $J_0^k(R)$ is called the \textit{Jacobian ideal} of $R$ (over $k$) and is denoted by $\jac_k R$.

{\rm (2)} Let $R$ be an equicharacteristic complete local ring with residue field $k$.
Suppose that $\phi:S=k\llbracket X_1, \ldots,X_m\rrbracket \to R$ is a surjective ring homomorphism.
We set $\ker\phi=(f_1,\ldots,f_r)$ and $d=m-\dim R$, where $f_1,\ldots,f_r\in S$.
For each integer $n$, the ideal $\phi(I_{n+d}(\partial f_j/\partial X_i))$ is uniquely determined regardless of the choices of $S$, $\phi$, and $f_1,\ldots,f_r$; see Remark \ref{J is w.d.} and Proposition \ref{local unique}.
Similarly, we denote this ideal by $J_n(R)$.
In particular, $J_0(R)$ is called the \textit{Jacobian ideal} of $R$ and is denoted by $\jac R$.
\end{dfn}

\begin{rem}\label{J is w.d.}
Let $k$ be a field. 

{\rm (1)} Let $R=k[X_1, \ldots,X_m]/(f_1,\ldots,f_r)$ and $S=k[Y_1, \ldots,Y_n]/(g_1,\ldots,g_s)$.
Suppose that there is an isomorphism $\phi:R \xrightarrow{\cong} S$ of $k$-algebras.
Then $\phi$ induces the natural isomorphism $\Omega_{R/k} \xrightarrow{\cong} \Omega_{S/k}$ of modules over $R$ and $S$, where $\Omega_{R/k}$ and $\Omega_{S/k}$ are the module of K\"{a}hler differentials of $R$ and $S$ over $k$, respectively.
There exist exact sequences
$$
R^{\oplus r} \xrightarrow{\bigl(\tfrac{\partial f_j}{\partial X_i}\bigr)} R^{\oplus m} \to \Omega_{R/k} \to 0 \ {\rm and} \ 
S^{\oplus s} \xrightarrow{\bigl(\tfrac{\partial g_j}{\partial Y_i}\bigr)} S^{\oplus n} \to \Omega_{S/k} \to 0;
$$
see \cite[Section 16]{E} and \cite[Section 25]{Mat} for instance.
We have $\phi(I_{m-l}(\partial f_j/\partial X_i))=I_{n-l}(\partial g_j/\partial Y_i)$ for any integer $l$ since these ideals are $l$-th Fitting invariants.
This means that for an affine $k$-algebra $T$ and an integer $u$, the ideal $J_u^k(T)$ of $T$ is well-defined.
The ideal $J_u(T)$ is also well-defined for an equicharacteristic complete local ring $T$ with residue field $k$.
A proof of this fact, which does not use K\"{a}hler differentials, is provided in Section 4.

{\rm (2)} Let $(R,\m)$ be a local affine $k$-algebra with residue field $k$.
Then $R$ is Artinian since the maximal ideal is nilpotent by \cite[Theorem 5.5]{Mat}.
Since $R=k+\m$, we can choose a surjective homomorphism $\phi:k[X_1, \ldots,X_m] \to R$ of $k$-algebras such that $m=\codim_k(R)+\dim R$ and $\phi(X_1, \ldots,X_m)\subseteq \m$.
Considering the $(X_1, \ldots,X_m)$-adic completion, we see that $\codim(R)\le \codim_k(R)$.
On the other hand, let  $\phi:k\llbracket X_1, \ldots,X_m\rrbracket \to R$ be a surjective ring homomorphism such that $m=\codim(R)+\dim R$.
As $R$ is Artinian and $m$ is the embedding dimension of $R$, $(X_1,\ldots, X_m)^n\subseteq\ker\phi\subseteq (X_1,\ldots, X_m)^2$ for some $n\ge 2$.
The natural surjection from $k[X_1, \ldots,X_m]/(X_1,\ldots, X_m)^n=k\llbracket X_1, \ldots,X_m\rrbracket/(X_1,\ldots, X_m)^n$ to $R$ is induced by $\phi$, which means that $k[X_1, \ldots,X_m]/(f_1,\ldots, f_r)=k\llbracket X_1, \ldots,X_m\rrbracket/(f_1,\ldots, f_r) \cong R$ for some $f_1,\ldots, f_r\in (X_1,\ldots, X_m)^2\subseteq k[X_1, \ldots,X_m]$.
This implies that the equalities $\codim_{k}(R)=\codim(R)$ and $J_u^k(R)=J_u(R)$ hold for any integer $u$.

{\rm (3)} Let $R$ be a ring, and $A$ an $m\times n$ matrix over $R$.
We see that $I_{r+1}(A)\subseteq I_r(A)$ by the Laplace expansion.
Hence we have $J_{r+1}^k(R)\subseteq J_r^k(R)$ and $J_{r+1}(S)\subseteq J_r(S)$ for any affine $k$-algebra $R$, any an equicharacteristic complete local ring $S$ with residue field $k$, and any integer $r$.
\end{rem}

Recall well-known facts about the tensor product of quotient rings of polynomial rings and the completed tensor product of quotient rings of power series rings.
The equalities regarding Krull dimensions of rings are obtained by considering Noether normalization and the dimension formula for flat extensions of local rings, respectively; see \cite[Theorems A.11. and A.14.]{BH} for instance.

\begin{lem}\label{t and ct}
Let $k$ be a field.
\begin{enumerate}[\rm(1)]
\item Let $R=k[X_1, \ldots,X_m]/(f_1,\ldots,f_r)$ and $S=k[Y_1, \ldots,Y_n]/(g_1,\ldots,g_s)$.
Then the tensor product $R\otimes_k S$ of $R$ and $S$ over $k$ is isomorphic to $k[X_1, \ldots,X_m, Y_1, \ldots,Y_n]/(f_1,\ldots,f_r, g_1,\ldots,g_s)$ as $k$-algebras in a natural way.
Also, the natural ring homomorphisms $R\to R\otimes_k S$ and $S\to R\otimes_k S$ are flat, and the equality $\dim R\otimes_k S=\dim R+\dim S$ holds.
\item Let $R=k\llbracket X_1, \ldots,X_m\rrbracket/(f_1,\ldots,f_r)$ and $S=k\llbracket Y_1, \ldots,Y_n\rrbracket/(g_1,\ldots,g_s)$.
Then the completed tensor product $R\ \widehat{\otimes}_k\  S$ of $R$ and $S$ over $k$ is isomorphic to $k\llbracket X_1, \ldots,X_m, Y_1, \ldots,Y_n\rrbracket/(f_1,\ldots,f_r, g_1,\ldots,g_s)$ in a natural way.
Also, the natural ring homomorphisms $R\to R\ \widehat{\otimes}_k\  S$ and $S\to R\ \widehat{\otimes}_k\  S$ are flat and local, and the equality $\dim R\ \widehat{\otimes}_k\  S=\dim R+\dim S$ holds.
\end{enumerate}
\end{lem}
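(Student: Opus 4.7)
The plan is to prove each part by standard reductions; the two parts are largely independent.

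For part (1), first establish the $k$-algebra isomorphism: by the universal property of tensor products one has $k[X_1,\ldots,X_m]\otimes_k k[Y_1,\ldots,Y_n]\cong k[X_1,\ldots,X_m,Y_1,\ldots,Y_n]$, and applying right exactness of $-\otimes_k-$ to the presentations $k[X]^{\oplus r}\to k[X]\to R\to 0$ and $k[Y]^{\oplus s}\to k[Y]\to S\to 0$ (using that every $k$-module is free) identifies $R\otimes_k S$ with the claimed polynomial quotient. Flatness of $R\to R\otimes_k S$ is immediate, since $S$ is a free $k$-module, so $-\otimes_k S$ preserves exactness of sequences of $R$-modules; the flatness of $S\to R\otimes_k S$ is symmetric. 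For the dimension, apply Noether normalization over $k$ to obtain module-finite injections $k[Z_1,\ldots,Z_d]\hookrightarrow R$ with $d=\dim R$ and $k[W_1,\ldots,W_e]\hookrightarrow S$ with $e=\dim S$; base change over the field $k$ preserves both injectivity and the module-finite property, so $k[Z_1,\ldots,Z_d,W_1,\ldots,W_e]\hookrightarrow R\otimes_k S$ is a module-finite injection, yielding $\dim(R\otimes_k S)=d+e$.

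For part (2), interpret the completed tensor product as $R\,\widehat{\otimes}_k\,S=\varprojlim_{i,j}(R/\mathfrak{m}_R^i\otimes_k S/\mathfrak{m}_S^j)$. Each term is a finite-dimensional $k$-algebra, to which part (1) applies, giving $R/\mathfrak{m}_R^i\otimes_k S/\mathfrak{m}_S^j\cong k[X,Y]/((f)+(g)+\mathfrak{m}_R^i+\mathfrak{m}_S^j)$; passing to the inverse limit (the filtrations involved are cofinal with the $(X,Y)$-adic filtration on $k\llbracket X,Y\rrbracket/(f,g)$) identifies the completed tensor product with $k\llbracket X,Y\rrbracket/(f,g)$. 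The maps $R\to R\,\widehat{\otimes}_k\,S$ and $S\to R\,\widehat{\otimes}_k\,S$ are local by inspection of images of maximal ideals, and the closed fibre of the first is $T/\mathfrak{m}_R T\cong k\llbracket Y\rrbracket/(g)=S$. Flatness of $T:=R\,\widehat{\otimes}_k\,S$ over $R$ is the substantive point; it follows from the standard structural theory of completed tensor products of Noetherian complete local $k$-algebras over a field (cf.\ EGA IV, or Matsumura's \textit{Commutative Ring Theory}). With flatness and locality in hand, the flat-extension dimension formula for local Noetherian rings gives $\dim T=\dim R+\dim T/\mathfrak{m}_R T=\dim R+\dim S$.

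The main obstacle is flatness in part (2): unlike in part (1), one cannot exhibit $T$ as a base change of a flat module over $R$ in a completely elementary way, since the obvious factorization $R\to R\llbracket Y_1,\ldots,Y_n\rrbracket\twoheadrightarrow T$ ends in a quotient that need not be $R$-flat. The cleanest justification is to invoke the general structural fact just mentioned rather than attempt a fully hands-on argument.
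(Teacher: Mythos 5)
Your proposal is correct and follows essentially the same route as the paper, which does not prove the lemma in detail but recalls it as standard, indicating exactly the two ingredients you use: Noether normalization for the dimension count in the affine case, and the dimension formula for flat local extensions in the complete case (citing Bruns--Herzog, Theorems A.11 and A.14). Like the paper, you treat the flatness of the completed tensor product as a known structural fact rather than reproving it, so there is no substantive divergence.
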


Let $R$ be an affine algebra over a field $k$, and let $n$ be an integer.
If $\edd R\le n$, then the conditions $\sing R\subseteq \V(J_n^k(R))$ and $\spec(R)=\V(J_{n+1}^k(R))$ hold; see Section 1 or Propositions \ref{gen. of EandW} and \ref{new d+1}.
Theorem \ref{affine ne} states that these conditions are deeply interconnected.
In fact, for example, we put $A=k[x]/(x^2)$ and $R'=A\otimes_k R$.
Then we see that there is an equality $\edd R=\edd R'$ and that $\sing R'\subseteq \V(J_n^k(R'))$ holds if and only if $\spec(R)=\V(J_{n+1}^k(R))$.
Note that $\sing (R)\subseteq \V(J_n^k(R))$ is neither a necessary nor a sufficient condition for $\spec(R)=\V(J_{n+1}^k(R))$ to hold.; see Example \ref{nhensu example}.

\begin{thm}\label{affine ne}
Let $k$ be a field, $R$ an affine $k$-algebra.
For any integer $n>0$, the following are equivalent:
\begin{enumerate}[\rm(1)]
\item Suppose that $A\to B$ is a flat homomorphism of affine $k$-algebras such that $a=\codim_k A\le n$.
If there is $\m\in\sing A$ such that $B/\m B\cong R$ as $k$-algebras and $\height \m=\dim A$, then $\sing B\nsubseteq \V(J_{n-a}^k(B))$;
\item Suppose that $A\to B$ is a flat homomorphism of affine $k$-algebras such that $\codim_k A\le n$.
If there is $\m\in\sing A$ such that $B/\m B\cong R$ as $k$-algebras and $\height \m=\dim A$, then $\sing B\nsubseteq \V(\jac_k(B))$;
\item Let $A$ be an affine $k$-algebra such that $\codim_k A\le n$.
If there is $\m\in\sing A$ such that $A/\m\cong k$ and $\height \m=\dim A$, then $\sing (A\otimes_k R) \nsubseteq \V(\jac_k(A\otimes_k R))$;
\item $\sing (A\otimes_k R) \nsubseteq \V(\jac_k(A\otimes_k R))$ for some local affine $k$-algebra $A$ with residue field $k$ such that $\codim_k A$ is equal to $n$;
\item $\sing (A\otimes_k R) \nsubseteq \V(J_{n-a}^k(A\otimes_k R))$ for some local affine $k$-algebra $A$ with residue field $k$ such that $a=\codim_k A\le n$;
\item $\spec(R)\ne \V(J_n^k(R))$ holds.
\end{enumerate}
\end{thm}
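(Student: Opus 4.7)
The plan is to establish the equivalences via the cycle $(1) \Rightarrow (2) \Rightarrow (3) \Rightarrow (4) \Rightarrow (5) \Rightarrow (6) \Rightarrow (1)$. The computational engine is the block-diagonal (resp.\ block-triangular) structure of the Jacobian matrix of a tensor product $A \otimes_k R$ (resp.\ of a flat extension $A \to B$ with fiber $R$), supplied by Lemma~\ref{t and ct}(1). Combined with Remark~\ref{J is w.d.}(2), one may choose for a local affine $k$-algebra with residue field $k$ a presentation whose defining relations lie in the square of the polynomial-ring maximal ideal, so that every entry of its Jacobian matrix lands in the maximal ideal of the ring.

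The implications among $(1)$--$(5)$ are formal. $(1) \Rightarrow (2)$ uses $J_{n-a}^k(B) \subseteq J_0^k(B) = \jac_k(B)$ from Remark~\ref{J is w.d.}(3): a prime missing the smaller ideal misses the larger. $(2) \Rightarrow (3)$ specializes $B = A \otimes_k R$, which is flat over $A$ since $R$ is $k$-free and has fiber $R$ over $\mathfrak{m}$ when $A/\mathfrak{m} \cong k$. $(3) \Rightarrow (4)$ specializes to $A = k[X_1, \ldots, X_n]/(X_1, \ldots, X_n)^2$, an Artinian local $k$-algebra with residue field $k$, codimension $n$, and singular maximal ideal of height $0$. $(4) \Rightarrow (5)$ chooses $a = n$, whence $n - a = 0$ and $J_{n-a}^k = \jac_k$.

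For $(5) \Rightarrow (6)$, with $A = k[X]/(f)$ presented so that $f$ lies in the square of the maximal ideal and $R = k[Y]/(g)$, Lemma~\ref{t and ct}(1) identifies $A \otimes_k R$ with $k[X,Y]/(f,g)$ and makes its Jacobian matrix block-diagonal. A Laplace expansion of its minors yields
$$
J_{n-a}^k(A \otimes_k R) = \sum_{p+q = n-a} J_p^k(A) \cdot J_q^k(R) \cdot (A \otimes_k R),
$$
under the convention $J_u^k(\cdot) = \text{(whole ring)}$ for $u \le -\codim$. The choice of presentation forces $J_p^k(A) \subseteq \mathfrak{m}_A$ for every $p > -a$, so each such term lies in any prime $\mathfrak{p}$ containing $\mathfrak{m}_A(A \otimes_k R)$. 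After reducing to such a $\mathfrak{p}$ (handling the remaining components of $\sing(A \otimes_k R)$ via the Jacobian criterion applied to the factor $R$), the only surviving term at $p = -a$ is $J_n^k(R) \cdot (A \otimes_k R)$; hence $J_n^k(R) \nsubseteq \mathfrak{p} \cap R$, giving $(6)$.

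For $(6) \Rightarrow (1)$, given $\mathfrak{q} \in \spec R$ with $J_n^k(R) \nsubseteq \mathfrak{q}$, use the bijection between $\spec R \cong \spec(B/\mathfrak{m}B)$ and primes of $B$ containing $\mathfrak{m}B$ to lift $\mathfrak{q}$ to $\mathfrak{p} \in \spec B$; flatness of $A_\mathfrak{m} \to B_\mathfrak{p}$ together with the non-regularity of $A_\mathfrak{m}$ yields $\mathfrak{p} \in \sing B$. Presenting $B = k[X,Y]/(f, G)$ extending the presentation of $A$ (with $f$ in the square of the maximal ideal lifting $\mathfrak{m}$), the Jacobian $J_B$ is block-triangular and reduces modulo $\mathfrak{m}B$ to $\bigl(\begin{smallmatrix} 0 & C \\ 0 & J_R \end{smallmatrix}\bigr)$ over $R$. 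The $(n - a + \codim_k B)$-minors of this reduced matrix must draw all columns from the right block, and those drawing all rows from the bottom are precisely minors of $J_R$; via the Fitting-invariant identity $J_u^k(\cdot) = \text{Fit}_{\dim - u}(\Omega)$ they recover $J_n^k(R)$ exactly. Hence $J_n^k(R) \subseteq (J_{n-a}^k(B) + \mathfrak{m}B)/\mathfrak{m}B$, so lifting a representative of $J_n^k(R) \setminus \mathfrak{q}$ yields an element of $J_{n-a}^k(B) \setminus \mathfrak{p}$. The main obstacles are this Fitting-ideal minor computation (together with the codimension bookkeeping needed to verify that all rows from the bottom block can indeed be chosen, which uses that $(6)$ forces $n \le \edd R \le \dim R$), securing that $\mathfrak{p}$ lies over $\mathfrak{m}_A$ in the reduction step for $(5) \Rightarrow (6)$, and handling the case $A/\mathfrak{m} \ne k$, which requires a base change to the residue field.
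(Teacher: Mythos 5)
Your chain (1)$\Rightarrow$(2)$\Rightarrow$(3)$\Rightarrow$(4)$\Rightarrow$(5) and the skeleton of (5)$\Rightarrow$(6) coincide with the paper's. In (5)$\Rightarrow$(6) the "obstacle'' you flag (securing that the witnessing prime lies over $\m_A$) is not one: by Remark \ref{J is w.d.}(2) a local affine $k$-algebra with residue field $k$ is Artinian, so $\m_A$ is nilpotent and \emph{every} prime of $A\otimes_k R$ contains $\m_A(A\otimes_k R)$; in particular your parenthetical about handling "remaining components via the Jacobian criterion'' is both unnecessary and unavailable, since $k$ is not assumed perfect in this theorem.

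The genuine gap is in (6)$\Rightarrow$(1). First, you cannot present $A$ "with $f$ in the square of the maximal ideal lifting $\m$'': in (1) the ring $A$ is an arbitrary affine $k$-algebra (not local, and $A/\m$ need not be $k$), its defining ideal is fixed and in general not contained in the square of the preimage of $\m$, so the reduction of the Jacobian of $B$ modulo $\m B$ need not have zero upper-left block. Second, and more seriously, the lower-right block $\partial G/\partial Y$ mod $\m B$ is not the Jacobian matrix of a presentation of $R=B/\m B$: such a presentation has defining ideal $(h_1,\dots,h_t,g_1,\dots,g_s)$ with the $h_j$ generating the preimage of $\m$, so $J_n^k(R)$ involves minors using the $h$-columns, and your appeal to the Fitting-invariant identity does not make the surviving minors "recover $J_n^k(R)$ exactly''; indeed the functorial (conormal-sequence) comparison of Fitting ideals of $\Omega_{B/k}\otimes_B R$ and $\Omega_{R/k}$ gives an inclusion in the direction opposite to the one you need, namely $J_n^k(R)\subseteq(J_{n-a}^k(B)+\m B)/\m B$. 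The paper obtains that inclusion by choosing the $h_j$ inside $k[X_1,\dots,X_l]$ (making the relevant Jacobian block triangular), applying a Laplace expansion to get $J_n^k(R)\subseteq I_{n+e-l}(\partial g_j/\partial Y_i)R$ with $e=l+m-\dim R$, and then using the inequality $\dim B\ge\dim A+\dim R$ — which follows from flatness together with the hypothesis $\height\m=\dim A$, and is the real codimension bookkeeping here, not "$n\le\edd R\le\dim R$'' (invoking Proposition \ref{new d+1} would anyway be circular, as it is proved from this theorem) — to conclude $d+n-a\le n+e-l$ and hence $I_{n+e-l}(\partial g_j/\partial Y_i)B\subseteq J_{n-a}^k(B)$; one also needs that (6) forces $J_n^k(R)\ne 0$ so these minor sizes are meaningful. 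Without this comparison your lift of an element of $J_n^k(R)\setminus\q$ to $J_{n-a}^k(B)\setminus\p$ is not established. Finally, the worry about "$A/\m\ne k$'' is an artifact of your setup: no base change is needed, since any ideal of $A=k[X_1,\dots,X_l]/(f_1,\dots,f_r)$ is generated by images of polynomials in the $X$-variables.
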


\begin{proof}
The implication (1)$\Rightarrow$(2) holds since $J_{n-a}^k(B)\subseteq J_0^k(B)=\jac_k(B)$ by Remark \ref{J is w.d.}.
Also, (2)$\Rightarrow$(3) immediately follows from Lemma \ref{t and ct}.
Setting $A=k[X_1, \ldots,X_n]/(X_1, \ldots,X_n)^2$, it follows that (3)$\Rightarrow$(4). 
It is clear that (4)$\Rightarrow$(5) holds.

We prove (5)$\Rightarrow$(6).
Suppose that $\sing (A\otimes_k R) \nsubseteq \V(J_{n-a}^k(A\otimes_k R))$ for some a local affine $k$-algebra with residue field $k$ such that $a=\codim_k A\le n$.
Thanks to Remark \ref{J is w.d.}(2), $A$ is Artinian and we can put $A=k[X_1, \ldots,X_a]/(f_1,\ldots, f_r)$, for some $f_1,\ldots, f_r\in (X_1,\ldots, X_a)^2$.
We set $R=k[Y_1, \ldots,Y_m]/(g_1,\ldots, g_s)$.
Then $B:=k[X_1,\ldots, X_a,Y_1, \ldots,Y_m]/(f_1,\ldots, f_r,g_1,\ldots, g_s)\cong A\otimes_k R$ and $\dim B=\dim R$ by Lemma \ref{t and ct}.
Since $f_1,\ldots, f_r\in k[X_1, \ldots,X_a]$ and $g_1,\ldots, g_s\in k[Y_1, \ldots,Y_m]$, $J_{n-a}^k(B)$ is generated by the $(n+m-\dim R)$-minors of the $(a+m)\times (r+s)$ matrix
\begin{align*}
\begin{pmatrix}
   \Bigl(\dfrac{\partial f_j}{\partial X_i}\Bigr) & 0 \\
   0 &  \Bigl(\dfrac{\partial g_j}{\partial Y_i}\Bigr) \\
\end{pmatrix}
\end{align*}
over $B$.
For any $1\le j\le r$, $f_j$ belongs to $(X_1,\ldots, X_a)^2$, and thus $\partial f_j/\partial X_i$ are in $(X_1,\ldots, X_a)$ for all $1\le i\le a$.
We obtain $J_{n-a}^k(B)\subseteq I_{n+m-\dim R}(\partial g_j/\partial Y_i)B+(X_1,\ldots, X_a)B=J_n^k(R)B+(X_1,\ldots, X_a)B$.
Let $\p\in\sing B \setminus \V(J_{n-a}^k(B))$.
Since $A$ is Artinian, we get $(X_1,\ldots, X_a)B\subseteq\p$, which means $J_n^k(R)B\nsubseteq\p$.
Hence $\p\cap R\notin \V(J_n^k(R))$.

In order to show (6)$\Rightarrow$(1), we assume $\spec(R)\ne \V(J_n^k(R))$.
Set $A=k[X_1, \ldots,X_l]/(f_1,\ldots, f_r)$, where $l=a+\dim A$.
We can write $B=k[X_1, \ldots,X_l,Y_1, \ldots,Y_m]/(f_1,\ldots, f_r, g_1,\ldots, g_s)$ for some polynomials $g_1,\ldots, g_s \in k[X_1, \ldots,X_l,Y_1, \ldots,Y_m]$.
Since $\m\in\spec A$, we can choose polynomials $h_1,\ldots, h_t\in k[X_1, \ldots,X_l]$ such that $\m=(h_1,\ldots, h_t)A$ and $(f_1,\ldots, f_r)\subseteq (h_1,\ldots, h_t)$.
Then we see that $R\cong B/\m B=k[X_1, \ldots,X_l,Y_1, \ldots,Y_m]/(h_1,\ldots, h_t, g_1,\ldots, g_s)$.
Put $e=l+m-\dim R$.
For
\begin{align*}
U=
\begin{pmatrix}
   \Bigl(\dfrac{\partial h_j}{\partial X_i}\Bigr) & \Bigl(\dfrac{\partial g_j}{\partial X_i}\Bigr) \\
   0 & \Bigl(\dfrac{\partial g_j}{\partial Y_i}\Bigr) \\
\end{pmatrix},
\end{align*}
it is seen that $J_n^k(R)=I_{n+e}(U)R$.
Note that $(\partial h_j/\partial X_i)$ is an $l \times t$ matrix.
The $(n+e)$-minors of $U$ are of the form
\begin{align*}
T=
\begin{pmatrix}
   V & \ast \\
   0 & W \\
\end{pmatrix},
\end{align*}
where $V$ is a $p\times q$ submatrix of $(\partial h_j/\partial X_i)$, $W$ is an $(n+e-p)\times (n+e-q)$ submatrix of $(\partial g_j/\partial Y_i)$, $1\le p\le l$ and $1\le q\le t$.
By the Laplace expansion, $\operatorname{det}T=0$ if $p<q$, and $\operatorname{det}T\in I_{n+e-p}(\partial g_j/\partial Y_i)$ if $p\ge q$.
Therefore $J_n^k(R)$ is contained in $I_{n+e-l}(\partial g_j/\partial Y_i)R$.
By $\spec(R)\ne \V(J_n^k(R))$, $J_n^k(R)$ is nonzero, and hence $n+e-l\le {\rm min}\{m, s\}$.

Let $\p$ be a prime ideal of $R$ which does not contain $J_n^k(R)$, and let $\q$ be a prime ideal of $B$ such that $\p=\q/\m B$.
The ideal $J_n^k(R)$ is not contained in $\p$, and neither is $I_{n+e-l}(\partial g_j/\partial Y_i)R$, which means that $\q$ does not contain $I_{n+e-l}(\partial g_j/\partial Y_i)B$.
As $\q\cap A$ of $A$ contains the maximal ideal $\m$, $\q\cap A=\m$ and the natural homomorphism $A_\m\to B_\q$ is flat and local.
We get $\q\in\sing B$ because $\m\in\sing A$.

We see that $\dim B\ge \dim A+\dim R$.
In fact, let $Q$ be a prime ideal of $R$ such that $\height Q=\dim R$, and let $P$ be a prime ideal of $B$ such that $Q=P/\m B$.
Similarly, $P\cap A=\m$.
Since $A\to B$ is flat, we have $\dim B\ge \height P= \height \m +\height P/\m B=\dim A+\dim R$.
We obtain $d:=l+m-\dim B\le (l-\dim A)+(m-\dim R)=a+e-l$, and hence $d+n-a\le n+e-l\le {\rm min}\{m,s\}$.
The ideal $J_{n-a}^k(B)$ is generated by the $(d+n-a)$-minors of the $(n+m)\times (r+s)$ matrix
\begin{align*}
\begin{pmatrix}
   \Bigl(\dfrac{\partial f_j}{\partial X_i}\Bigr) & \Bigl(\dfrac{\partial g_j}{\partial X_i}\Bigr) \\
   0 &  \Bigl(\dfrac{\partial g_j}{\partial Y_i}\Bigr) \\
\end{pmatrix}
\end{align*}
over $B$.
Therefore $I_{n+e-l}(\partial g_j/\partial Y_i)B\subseteq I_{d+n-a}(\partial g_j/\partial Y_i)B\subseteq J_{n-a}^k(B)$, which means that $\q$ does not contain $J_{n-a}^k(B)$ but belongs to $\sing B$.
\end{proof}

The following theorem is an equicharacteristic complete local version of Theorem \ref{affine ne}.
The proof is omitted since it is similar to (or simpler than) that of the above theorem.

\begin{thm}\label{ct and t}
Let $k$ be a field, $R$ an equicharacteristic complete local ring with residue field $k$.
For any integer $n>0$, the following are equivalent:
\begin{enumerate}[\rm(1)]
\item Suppose that $(A,\m,k)\to (B,\n,k)$ is a flat and local homomorphism of equicharacteristic complete local rings such that $B/\m B\cong R$.
If $0<a=\codim A\le n$, then $\sing B\nsubseteq \V(J_{n-a}(B))$;
\item Suppose that $(A,\m,k)\to (B,\n,k)$ is a flat and local homomorphism of equicharacteristic complete local rings such that $B/\m B\cong R$.
If $0<\codim A\le n$, then $\sing B\nsubseteq \V(\jac(B))$;
\item Let $A$ be an equicharacteristic complete local ring with residue field $k$ such that $0<\codim A\le n$.
Then $\sing (A  \widehat{\otimes}_k  R) \nsubseteq \V(\jac(A  \widehat{\otimes}_k  R))$;
\item $\sing (A  \widehat{\otimes}_k  R) \nsubseteq \V(\jac(A  \widehat{\otimes}_k  R))$ for some Artinian equicharacteristic local ring $A$ with residue field $k$ such that $\codim A$ is equal to $n$;
\item $\sing (A  \widehat{\otimes}_k  R) \nsubseteq \V(J_{n-a}(A  \widehat{\otimes}_k  R))$ for some Artinian equicharacteristic local ring $A$ with residue field $k$ such that $a=\codim A\le n$;
\item $\spec(R)\ne \V(J_n(R))$ holds.
\end{enumerate}
\end{thm}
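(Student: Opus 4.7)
The plan is to mirror the proof of Theorem \ref{affine ne} essentially verbatim, replacing polynomial rings by formal power series rings and tensor products $\otimes_k$ by completed tensor products $\widehat{\otimes}_k$ (Lemma \ref{t and ct}(2)), with Remark \ref{J is w.d.}(2) supplying the needed presentations in the local setting. Several implications actually become cleaner here: a flat local map of complete local rings is automatically faithfully flat, any complete local ring $A$ with $\codim A > 0$ is automatically singular at its closed point (its embedding dimension exceeds $\dim A$), and Cohen's structure theorem furnishes the compatible presentations used in the nontrivial steps.

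The four routine implications go as follows. For (1)$\Rightarrow$(2), use $J_{n-a}(B) \subseteq J_0(B) = \jac(B)$ from Remark \ref{J is w.d.}(3). For (2)$\Rightarrow$(3), set $B := A \widehat{\otimes}_k R$ and apply Lemma \ref{t and ct}(2); the closed point of $A$ lies in $\sing A$ since $\codim A > 0$, and $B/\m B \cong R$. For (3)$\Rightarrow$(4), take $A := k\llbracket X_1,\ldots,X_n\rrbracket/(X_1,\ldots,X_n)^2$, which is Artinian of codimension $n$. Finally (4)$\Rightarrow$(5) is the case $a = n$, $J_0 = \jac$.

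For (5)$\Rightarrow$(6), present $A = k\llbracket X_1,\ldots,X_a\rrbracket/(f_1,\ldots,f_r)$ with $f_j \in (X_1,\ldots,X_a)^2$ (possible since $A$ is Artinian with embedding dimension $a$) and $R = k\llbracket Y_1,\ldots,Y_m\rrbracket/(g_1,\ldots,g_s)$. Then $B = A \widehat{\otimes}_k R$ has a block-diagonal Jacobian matrix whose $f$-block entries lie in $(X_1,\ldots,X_a) \subseteq \sqrt{0_B}$, producing the inclusion $J_{n-a}(B) \subseteq J_n(R) B + (X_1,\ldots,X_a) B$; since the second summand lies in every prime of $B$, any $\q \in \sing B$ avoiding $J_{n-a}(B)$ gives $\q \cap R \in \spec R \setminus \V(J_n(R))$. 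For (6)$\Rightarrow$(1), choose compatible presentations $A = k\llbracket X_1,\ldots,X_l\rrbracket/(f_1,\ldots,f_r)$ with $l = a + \dim A$ and $B = k\llbracket X_1,\ldots,X_l,Y_1,\ldots,Y_m\rrbracket/(f_1,\ldots,f_r,g_1,\ldots,g_s)$, so that $R = B/\m B$ acquires a Jacobian matrix whose upper-left $l \times l$ block of $\partial X_j/\partial X_i = \delta_{ij}$ is the identity. Laplace expansion along those $X$-rows gives $J_n(R) \subseteq I_{n+m-\dim R}(\partial g_j/\partial Y_i) R$, and the bound $\dim B \ge \dim A + \dim R$ yields $I_{n+m-\dim R}(\partial g_j/\partial Y_i) B \subseteq J_{n-a}(B)$. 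Any $\p \in \spec R \setminus \V(J_n(R))$ lifts to $\q \subseteq B$ with $\q/\m B = \p$; faithful flatness forces $\q \cap A = \m$, so $A_\m \to B_\q$ is flat local, and non-regularity of $A_\m$ descends to non-regularity of $B_\q$.

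The main obstacle will be the bookkeeping of Jacobian minor sizes and the numerical inequalities relating $a$, $n$, $\dim A$, $\dim B$, and $\dim R$ — in particular verifying the two key inclusions $J_{n-a}(B) \subseteq J_n(R) B + (X_1,\ldots,X_a) B$ and $I_{n+m-\dim R}(\partial g_j/\partial Y_i) B \subseteq J_{n-a}(B)$ with the correct indexing. Once those are in order, everything drops out as a line-by-line translation of the affine argument, with the Cohen structure theorem substituting for the elementary choice of presentation used there.
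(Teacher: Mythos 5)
Your proposal is correct and is exactly what the paper intends: the paper omits the proof of Theorem \ref{ct and t}, stating it is "similar to (or simpler than)" that of Theorem \ref{affine ne}, and your line-by-line translation (power series rings, completed tensor products via Lemma \ref{t and ct}(2), Cohen presentations with $f_j\in(X_1,\ldots,X_a)^2$ for the Artinian ring) together with the two key inclusions you state reproduces that argument, with the correct simplifications in the local case (the closed point of $A$ is automatically singular since $\codim A>0$, the maximal ideal $\m$ is generated by the images of the presentation variables so the identity block replaces the $(\partial h_j/\partial X_i)$ block, and flat local homomorphisms give $\dim B=\dim A+\dim R$).
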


\begin{ex}\label{nhensu example}
Let $R=k[X,Y_1,\ldots,Y_{n+1}]/(XY_1,\ldots,XY_{n+1})$ be a quotient of a polynomial ring over a field $k$, where $n\ge 1$.
The Jacobian matrix of $XY_1,\ldots,XY_{n+1}$ is the $(n+1)\times (n+2)$ matrix
\begin{align*}
T=
\begin{pmatrix}
   Y_1 & X & 0 & \cdots & 0 \\
   Y_2 & 0 & X & \ddots & \vdots \\
   \vdots & \vdots & \ddots & \ddots & 0 \\
   Y_{n+1} & 0 & \cdots & 0 & X \\
\end{pmatrix}.
\end{align*}
Since $\codim_k R=1$, we obtain the equalities $J_{n-1}^k(R)=I_{n}(T)R=(X^n,X^{n-1}Y_1,\ldots,X^{n-1}Y_{n+1})R$ and $J_n^k(R)=I_{n+1}(T)R=(X^{n+1},X^{n}Y_1,\ldots,X^{n}Y_{n+1})R$.
Hence one has $\sing R=\V((X,Y_1,\ldots,Y_{n+1})R)\subseteq\V(J_{n-1}^k(R))$ and $\spec R\ne \V(J_n^k(R))$ as the prime ideal $(Y_1,\ldots,Y_{n+1})R$ of $R$ does not belong to $\V(J_n^k(R))$.
Fix $m\le n$.
Let $A=k[Z_1,\ldots,Z_m]/(Z_1^2,\ldots,Z_m^2)$ be a quotient of a polynomial ring over $k$.
We put $S:=R\otimes_k A\cong k[X,Y_1,\ldots,Y_{n+1},Z_1,\ldots,Z_m]/(XY_1,\ldots,XY_{n+1},Z_1^2,\ldots,Z_m^2)$.
The Jacobian matrix of $XY_1,\ldots,XY_{n+1},Z_1^2,\ldots,Z_m^2$ is the $(m+n+1)\times (m+n+2)$ matrix
\begin{align*}
U=
\begin{pmatrix}
   T & 0 & \cdots & 0\\
   0 & 2Z_1 & \ddots & \vdots \\
   \vdots & \ddots & \ddots & 0 \\
   0 & \cdots & 0 & 2Z_m \\
\end{pmatrix}.
\end{align*}
It is seen that $\codim_k A=m$ and $\codim_k S=m+1$ hold.
We have $J_{n-m}^k(S)=I_{n+1}(U)S\supseteq X^{n+1}S$ and $J_{n-m+1}^k(S)=I_{n+2}(U)S\subseteq (Z_1,\ldots,Z_m)S$.
This means $\sing S=\spec S\nsubseteq \V(J_{n-m}^k(S))$ and $\spec S= \V(J_{n-m+1}^k(S))$ as the prime ideal $(Y_1,\ldots,Y_{n+1}, Z_1,\ldots,Z_m)S$ of $S$ does not belong to $\V(J_{n-m}^k(S))$.
\end{ex}

Let $R$ be either an affine algebra over a field or an equicharacteristic complete local ring, of dimension $d$.
The purpose of the remainder of this section is to characterize $\edd R$ in terms of ideals generated by the minors of the Jacobian matrix.
The proposition below is a classical result known as a corollary of the Jacobian criterion; see \cite[Theorem 30.3]{Mat} for instance.
By \cite[Theorem 5.4]{W} and \cite[Theorem 3.7]{W3}, under some assumptions, and by \cite[Theorem 1.1]{IT2}, in the general case, it was proved that $\ca^{d+1}(R)$ contains some power of the Jacobian ideal of $R$ when $\edd R=0$.
This fact implies Proposition \ref{gen. of EandW} in the case $\edd R=0$; see \cite[Lemma 2.10(2)]{IT}.
(Indeed, thanks to Corollary \ref{SingV=(d+1)}, these assertions are now equivalent.)
The proofs of the above result in \cite{IT2} do not require the theory of smoothness.
Here, we provide an elementary proof of Proposition \ref{gen. of EandW} using only the result of Iyengar and Takahashi \cite{IT2}.

\begin{prop}\label{gen. of EandW}
Let $k$ be a field.
\begin{enumerate}[\rm(1)]
\item Let $R$ be an affine $k$-algebra.
Then $\sing R\subseteq \V(J_{\edd R}^k(R))$.
\item Let $R$ be an equicharacteristic complete local ring with residue field $k$.
Then $\sing R\subseteq \V(J_{\edd R}(R))$.
\end{enumerate}
\end{prop}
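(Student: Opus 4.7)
The plan is to reduce to the equidimensional case, which is \cite[Theorem 1.1]{IT2} combined with Corollary \ref{SingV=(d+1)} and \cite[Lemma 2.10(2)]{IT}: when $\edd R=0$, some power of $\jac R$ lies in $\ca^{d+1}(R)$, hence $\sing R=\V(\ca^{d+1}(R))\subseteq\V(\jac R)$. Throughout, fix $d=\dim R$, $e=\edd R$, a presentation $R=k[X_1,\ldots,X_m]/(f_1,\ldots,f_r)$ and its Jacobian $T=(\partial f_j/\partial X_i)$.

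The first step will be a purely matrix-theoretic containment. For each minimal prime $\q$ of $R$, lift $\q$ to $\tilde{\q}\subseteq k[X]$ and present $R/\q=k[X]/(f_1,\ldots,f_r,g_1,\ldots,g_s)$; the Jacobian $T_\q$ of $R/\q$ then contains $T$ as a submatrix. Writing $d_\q=\dim R/\q$, the definition of $e$ forces $d_\q\ge d-e$, hence $m-d+e\ge m-d_\q$, and therefore $I_{m-d+e}(T)\subseteq I_{m-d_\q}(T_\q)$ in $k[X]$. Reducing modulo $\tilde{\q}$ yields the key inclusion
\[
J_e^k(R)\cdot(R/\q)\ \subseteq\ \jac(R/\q).
\]
Because $R/\q$ is an equidimensional domain, the equidimensional case applied to it gives $\sing(R/\q)\subseteq\V(\jac(R/\q))$. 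Consequently, for every prime $\p\supseteq\q$ with $\p/\q\in\sing(R/\q)$, the displayed containment forces $J_e^k(R)\subseteq\p$.

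The residual task, which is the main obstacle, is to treat primes $\p\in\sing R$ for which $(R/\q)_{\p/\q}$ is regular for every minimal prime $\q\subseteq\p$. Such pathologies do occur: $\p$ may contain several minimal primes of $R$ (so $R_\p$ fails to be a domain, as with the origin in $k[X,Y]/(XY)$), or $R_\p$ may carry a nonzero nilradical while the quotient by the unique minimal prime is already regular. I would dispatch these by contradiction using Theorem \ref{affine ne}: if $\sing R\not\subseteq\V(J_e^k(R))$ then a fortiori $\spec R\ne\V(J_e^k(R))$, so by (6)$\Rightarrow$(3) there exists a local affine $k$-algebra $A$ with $\codim_k A=e$, residue field $k$, and a singular maximal ideal of height $\dim A$, satisfying $\sing(A\otimes_k R)\not\subseteq\V(\jac(A\otimes_k R))$. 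Thickening $R$ by $A$ enlarges the nilpotent/multi-component structure at the bad prime, so that the exceptional prime of $A\otimes_k R$ is now witnessed by a singularity of a quotient of $A\otimes_k R$ by one of its minimal primes; this places us back in the matrix-inclusion regime of the first step and produces the required contradiction.

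Part (2) is established by the same strategy, with the tensor product replaced by the completed tensor product and Theorem \ref{ct and t} invoked in place of Theorem \ref{affine ne}; the matrix inclusion and the equidimensional input from \cite[Theorem 1.1]{IT2} apply verbatim in the equicharacteristic complete local setting.
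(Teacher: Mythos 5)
Your first step is correct: presenting $R/\q$ by adding generators to $f_1,\ldots,f_r$ makes the Jacobian of $R$ a submatrix of that of $R/\q$, and $\dim R/\q\ge\dim R-\edd R$ gives $J_{\edd R}^k(R)\cdot(R/\q)\subseteq\jac_k(R/\q)$, so the equidimensional case handles every $\p\in\sing R$ that is witnessed by a singularity of some $R/\q$. But the ``residual task'' you isolate is exactly the content of the proposition, and the contradiction via Theorem \ref{affine ne} does not close. Take $e=\edd R>0$ and an Artinian local $A$ with $\codim_k A=e$ (any $A$ allowed in conditions (3)--(5) of that theorem that you could hope to control is of this type after localizing the problem). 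Since $\m_A(A\otimes_k R)$ is nilpotent, every minimal prime of $A\otimes_k R$ contains it, so the quotients of $A\otimes_k R$ by its minimal primes are isomorphic to the quotients $R/\q$: thickening by $A$ does \emph{not} make the bad prime ``witnessed by a singularity of a quotient by a minimal prime''; those quotients are unchanged. Moreover, writing the Jacobian of $A\otimes_k R$ in block form (the $A$-block has entries in $\m_A$) one gets
\[
J_e^k(R)(A\otimes_k R)\ \subseteq\ \jac_k(A\otimes_k R)\ \subseteq\ J_e^k(R)(A\otimes_k R)+\m_A(A\otimes_k R),
\]
so under the homeomorphism $\spec(A\otimes_k R)\cong\spec R$ the closed set $\V(\jac_k(A\otimes_k R))$ corresponds to $\V(J_e^k(R))$, while $\sing(A\otimes_k R)=\spec(A\otimes_k R)$ because $A\to(A\otimes_k R)_\P$ is flat and local with $A$ singular. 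Hence the contradiction you need, $\sing(A\otimes_k R)\subseteq\V(\jac_k(A\otimes_k R))$, is literally the statement $\spec R=\V(J_e^k(R))$, which is strictly stronger than the proposition and is false in general: Example \ref{nhensu example} has $\edd R=n$ and $\spec R\ne\V(J_n^k(R))$ even though $\sing R\subseteq\V(J_n^k(R))$. If instead you try to use the specific bad prime $\p$, then at the prime $\P$ of $A\otimes_k R$ lying over it the containment $\jac_k(A\otimes_k R)\subseteq\P$ is equivalent to $J_e^k(R)\subseteq\p$, i.e.\ to the original goal, so the argument is circular. (Note also that the hypothesis you feed into Theorem \ref{affine ne}, namely $\spec R\ne\V(J_e^k(R))$, genuinely loses the information that the missing prime is singular, and no later step can recover it.)

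The paper gets around precisely these unwitnessed primes (embedded/nilpotent structure, or several minimal components meeting at $\p$) by a different mechanism: induction on $\edd R$. One adjoins a variable $Y$ and replaces the generators $g_1,\ldots,g_s$ of the intersection of the lower-dimensional primary components by $Yg_1,\ldots,Yg_s$; the resulting ring $B/J$ has the same dimension, $\edd(B/J)=e-1$, and every $\p\in\sing R$ lifts to a singular prime of $B/J$ (the three-case analysis there is exactly what handles your problematic primes: either $(B/J)_\q\cong R_\p$, or $R_\p\to(B/J)_\q$ is flat local, or $(B/J)_\q$ is not a domain). A Laplace-expansion argument on the enlarged Jacobian then transfers the inductive containment back to $J_e^k(R)$. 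If you want to salvage your outline, you would need a device of this kind that changes the minimal-prime structure seen from $\p$, rather than an Artinian thickening, which changes nothing modulo nilpotents.
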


\begin{proof}
The proof is by induction on $e:=\edd R$.
When $e=0$, we have $J_0^k(R)=\jac_k(R)$.
It follows from \cite[Lemma 2.10(2)]{IT} and \cite[Theorem 1.1]{IT2} 
that $\sing R\subseteq\V(\ca(R))\subseteq \V(\jac_k(R))$ holds.

Assume $e>0$.
We only prove the affine case.
(The local case is proved similarly.)
We take a polynomial ring $A=k[X_1, \ldots,X_n]$ over $k$ and an ideal $I=(f_1,\ldots, f_r)$ of $A$ such that $A/I= R$.
Let $I=\q_1\cap\cdots\cap\q_m$ be a shortest primary decomposition of $I$ and $\sqrt{\q_i}=\p_i$ for each $1\le i\le m$.
We may assume that $\dim R/\q_1=\cdots=\dim R/\q_l=\dim R$, $\dim R/\q_i<\dim R$ for all $l+1\le i\le m$, and $\p_1,\cdots,\p_h$ are all the minimal prime ideal of $A/I$.
Note that $m\ge h>l$ as $e>0$.
Then $e={\rm sup}\{\dim R- \dim A/\p_i \mid l+1\le i\le h \}$.
Put $\a=\q_{l+1}\cap\cdots\cap\q_m=(g_1,\ldots,g_s)$ and $B=k[X_1, \ldots,X_n,Y]$, and let $J=(f_1,\ldots, f_r, Yg_1,\ldots, Yg_s)$ be an ideal of $B$.
For any ideals $\mathfrak{b}, \mathfrak{c}$ of $A$, it is easy to see that the equalities $(\mathfrak{b}, Y)B\cap(\mathfrak{c}, Y)B=(\mathfrak{b}\cap\mathfrak{c}, Y)B$, $\mathfrak{b}B\cap\mathfrak{c}B=(\mathfrak{b}\cap\mathfrak{c})B$, and $(\mathfrak{b}, Y)B\cap\mathfrak{c}B=(\mathfrak{b}\cap\mathfrak{c}, \mathfrak{c}Y)B$ hold.
(Note that $\mathfrak{b}B=\mathfrak{b}[Y]$ and $(\mathfrak{b}, Y)B=\mathfrak{b}+YB$). Therefore we have $(\q_1, Y)B\cap\cdots\cap(\q_l,Y)B\cap\q_{l+1}B\cap\cdots\cap\q_m B=(I, \a Y)B=(f_1,\ldots, f_r, Yg_1,\ldots, Yg_s)=J$.
On the other hand, for any ideal $\mathfrak{b}$ of $A$, $B/(\mathfrak{b}, Y)B$ and $B/\mathfrak{b}B$ are isomorphic to $A/\mathfrak{b}$ and $(A/\mathfrak{b})[Y]$, respectively.
In particular, $\dim B/(\mathfrak{b}, Y)B=\dim A/\mathfrak{b}$, and $\dim B/\mathfrak{b}B=\dim A/\mathfrak{b}+1$.
By this, we can easily see that $(\p_1, Y)B,\cdots,(\p_l, Y)B,\p_{l+1}B,\cdots,\p_h B$ are all the minimal prime ideal of $B/J$, and that the equalities $\dim (B/J)=\dim R$ and $\edd (B/J)=e-1$ hold.
Put $c=n-\dim R$.

Let $\p\in\sing R$. We prove that $\p$ belongs to $\V(J_e^k(R))$. 
The ideal $J_e^k(R)$ is generated by the $(c+e)$-minors of the Jacobian matrix $(\partial f_j/\partial X_i)$ of $f_1,\ldots,f_r$.
If $c+e>{\rm min}\{n,r\}$,  $J_e^k(R)=0$, which means $\p\in\spec R=\V(J_e^k(R))$. 
We may assume $c+e\le {\rm min}\{n,r\}$.
Let $P$ be a prime ideal of $A$ such that $\p=P/I$, and let $Q=PB+YB$ be an ideal of $B$.
Since $B/Q\cong A/P$, $Q$ is a prime ideal containing $J$.
Put $\q=Q/J$.
We show that $\q$ is in $\sing (B/J)$ by considering three cases.
First, if $Y$ belongs to $JB_Q$, then $JB_Q=(f_1,\ldots, f_r, Y)B_Q$.
So, $(B/J)_\q\cong B_Q/JB_Q\cong A_P/IA_P\cong R_\p$ and thus $(B/J)_\q$ is not regular.
In a second step, we assume $JB_Q=\a B_Q$. 
Then we can easily see that $IA_P=\a A_P$. 
Consider the following two commutative diagrams:
\begin{equation*}
\xymatrix{
A/I \ar[r]\ar@{->>}[d] & B/J\ar@{->>}[d]\\
A/\a \ar[r]& (A/\a)[Y]\cong B/\a B \quad {\rm and} 
}
\xymatrix{
R_\p=(A/I)_\p \ar[r]\ar[d]_{\rotatebox{90}{$\cong$}}  & (B/J)_\q \ar[d]_{\rotatebox{90}{$\cong$}} \\
A_P/\a A_P \ar[r]&  B_Q/\a B_Q.
}
\end{equation*}
The right side is a localization of the left side.
Since $A/\a\to B/\a B$ is flat, $R_\p\to (B/J)_\q$ is flat and local.
The ring $R_\p$ is not regular, neither is $(B/J)_\q$.
Finally suppose that $Y\notin JB_Q$ and $JB_Q\subsetneq \a B_Q$. 
Then $g_j\notin JB_Q$ for some $1\le j\le s$.
However, $Yg_j\in JB_Q$, which implies that $(B/J)_\q\cong B_Q/JB_Q$ is not an integral domain, and that it is not a regular local ring.

We obtain $\q\in\sing (B/J)$.
By the induction hypothesis, $\sing (B/J)\subseteq\V(J_{e-1}^k(B/J))$ as $\edd (B/J)=e-1$.
By the equality $n+1-\dim (B/J)=n+1-\dim R=c+1$, the ideal $J_{e-1}^k(B/J)$ is generated by the $(c+e)$-minors of the $(n+1)\times (r+s)$ matrix
\begin{align*}
\begin{pmatrix}
   \dfrac{\partial f_1}{\partial X_1} & \cdots &  \dfrac{\partial f_r}{\partial X_1} & \dfrac{\partial g_1}{\partial X_1} Y & \cdots  &  \dfrac{\partial g_s}{\partial X_1} Y  \\
   \vdots & \ddots & \vdots &  \vdots & \ddots & \vdots\\
   \dfrac{\partial f_1}{\partial X_n} & \cdots  &  \dfrac{\partial f_r}{\partial X_n} & \dfrac{\partial g_1}{\partial X_n} Y & \cdots  &  \dfrac{\partial g_s}{\partial X_n} Y\\
   0 & \cdots & 0 & g_1 & \cdots & g_s
\end{pmatrix}
\end{align*}
over $B/J$.
Noting that $c+e\le {\rm min}\{n,r\}$, $J_{e-1}^k(B/J)$ contains $I_{c+e}(\partial f_j/\partial X_i)(B/J)$, and thus so does $\q$.
This yields $I_{c+e}(\partial f_j/\partial X_i)B\subset Q=PB+YB=P+YB$ in $B$, which deduces $I_{c+e}(\partial f_j/\partial X_i)A\subset P$ in $A$.
Therefore we have $\p\in\V(I_{c+e}(\partial f_j/\partial X_i)R)=\V(J_e^k(R))$. 
\end{proof}

The result below is also a classical one; see \cite[Theorem 30.4]{Mat} for instance.
Using Theorems \ref{affine ne} and \ref{ct and t}, it can also be proven through elementary arguments.

\begin{prop}\label{new d+1}
Let $k$ be a field.
\begin{enumerate}[\rm(1)]
\item Let $R$ be an affine $k$-algebra.
Then $\spec R=\V(J_{\edd R+1}^k(R))$.
\item Let $R$ be an equicharacteristic complete local ring with residue field $k$.
Then $\spec R=\V(J_{\edd R+1}(R))$.
\end{enumerate}
\end{prop}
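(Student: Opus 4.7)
The plan is to prove both parts by induction on $e := \edd R$. The base case $e=0$ will be extracted from Theorem \ref{affine ne} (resp.\ Theorem \ref{ct and t}) combined with Proposition \ref{gen. of EandW}, while the inductive step reuses the auxiliary construction from the proof of Proposition \ref{gen. of EandW}. I describe part (1) in detail; part (2) is strictly parallel.

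For the base case, $R$ is equidimensional and the goal is $\spec R = \V(J_1^k(R))$. I would apply the equivalence (6)$\Leftrightarrow$(3) of Theorem \ref{affine ne} with $n = 1$ and the test ring $A = k[X_1]/(X_1^2)$. This $A$ is a local affine $k$-algebra with $\codim_k A = 1$, unique maximal ideal $\m = (X_1) A \in \sing A$ (as $A$ is a non-regular Artinian ring), $\height \m = \dim A = 0$, and $A/\m = k$, fulfilling every hypothesis of (3). Since $A$ is a finite-dimensional $k$-vector space, the map $R \to A \otimes_k R$ is finite and faithfully flat with local Artinian fibres, so $\spec(A \otimes_k R) \to \spec R$ is a homeomorphism; Remark \ref{homeo}(2) then gives $\edd(A \otimes_k R) = 0$. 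Proposition \ref{gen. of EandW}(1) applied to the equidimensional ring $A \otimes_k R$ yields $\sing(A \otimes_k R) \subseteq \V(\jac_k(A \otimes_k R))$, contradicting condition (3); therefore (6) also fails and $\spec R = \V(J_1^k(R))$.

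For the inductive step ($e \ge 1$), I would reuse the auxiliary ring from the proof of Proposition \ref{gen. of EandW}. Writing $R = k[X_1, \dots, X_n]/(f_1, \dots, f_r)$ and $I = \q_1 \cap \cdots \cap \q_m$ for a shortest primary decomposition, let $\a = \q_{l+1} \cap \cdots \cap \q_m = (g_1, \dots, g_s)$ be the intersection of those $\q_i$ with $\dim R/\q_i < d$, and set $B = k[X_1, \dots, X_n, Y]/(f_1, \dots, f_r, Yg_1, \dots, Yg_s)$. That same proof already establishes $\dim B = d$ and $\edd B = e-1$, so by induction $\spec B = \V(J_e^k(B))$. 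For any $\p \in \spec R$ with preimage $P \subseteq k[X_1, \dots, X_n]$, the ideal $Q$ of $B$ generated by the images of $P$ and $Y$ is a prime of $B$ (since $B/Q \cong k[X_1, \dots, X_n]/P$ is a domain), hence $J_e^k(B) \subseteq Q$. Reducing modulo $Y$ through $B \twoheadrightarrow B/YB \cong R$, the Jacobian of $B$ becomes the block matrix $\left(\begin{smallmatrix} (\partial f_j/\partial X_i) & 0 \\ 0 & (g_j) \end{smallmatrix}\right)$, and a short minor analysis shows that the image of $J_e^k(B)$ in $R$ equals $J_{e+1}^k(R) + J_e^k(R) \cdot \a R$. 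Since $Q$ maps to $\p$, this sum, and in particular $J_{e+1}^k(R)$, lies in $\p$, completing the induction.

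The main obstacle will be the mod-$Y$ block-minor computation. Because the bottom row of the Jacobian of $B$ is $(0, \dots, 0, g_1, \dots, g_s)$, one must argue that after setting $Y = 0$ the only $(e+1+n-d) \times (e+1+n-d)$ minors with nonzero image in $R$ are (i) the $(e+1+n-d)$-minors of $(\partial f_j/\partial X_i)$, contributing $J_{e+1}^k(R)$, and (ii) the $(e+n-d)$-minors of $(\partial f_j/\partial X_i)$ times some $g_j$, contributing $J_e^k(R) \cdot \a R$; this follows from a Laplace expansion along the unique $Y$-row. Part (2) is proven identically, using Theorem \ref{ct and t} in place of Theorem \ref{affine ne}, the ring $A = k\llbracket X_1 \rrbracket/(X_1^2)$ with the completed tensor product of Lemma \ref{t and ct}(2) in the base case, and the obvious power-series analogue of $B$ in the inductive step.
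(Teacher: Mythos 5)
Your proposal is correct, but it diverges from the paper for $e\ge 1$. The paper settles all values of $e=\edd R$ in one stroke: assuming $\spec R\ne\V(J_{e+1}^k(R))$, it applies (6)$\Rightarrow$(1) of Theorem \ref{affine ne} with $n=e+1$ and $A=k[Y]/(Y^2)$ to get $\sing(A\otimes_kR)\nsubseteq\V(J_e^k(A\otimes_kR))$, and then contradicts Proposition \ref{gen. of EandW} by showing $\edd(A\otimes_kR)\le e$ through an explicit minimal-prime/height computation (flatness of $S\to S[Y]$, Krull's height theorem). Your base case $e=0$ is the same trick, except that you obtain $\edd(A\otimes_kR)=\edd R$ for free from the nilpotence of $Y$ and Remark \ref{homeo}(2) — a cleaner route than the paper's hands-on computation, and one which, applied with $n=e+1$ instead of $n=1$, would in fact dispose of all $e$ at once and make your induction unnecessary. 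Your inductive step is genuinely different from anything in the paper's proof of this proposition: you re-use the auxiliary ring $B=k[X_1,\dots,X_n,Y]/(I,\a Y)$ from the proof of Proposition \ref{gen. of EandW} (whose properties $\dim B=d$ and $\edd B=e-1$ that proof does establish), apply the inductive hypothesis to get $\spec B=\V(J_e^k(B))$, and descend via the prime $Q=(P,Y)B$ and a Laplace expansion along the $Y$-row of the reduced Jacobian; your block-minor analysis is sound (the sizes match: $J_e^k(B)$ and $J_{e+1}^k(R)$ are both generated by $(n+1-d+e)$-minors), and only the containments $J_{e+1}^k(R)\subseteq$ image of $J_e^k(B)$ and image of $Q=\p$ are actually needed — the asserted exact equality with $J_{e+1}^k(R)+J_e^k(R)\a R$ is superfluous, and the degenerate case where the minor size exceeds the matrix is harmless since then $J_{e+1}^k(R)=0$. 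In short: your argument trades the paper's single height computation for an induction mirroring the structure of Proposition \ref{gen. of EandW}'s own proof plus one extra minor computation; both are elementary and correct, the paper's being shorter and yours exposing more explicitly how $J_{e+1}^k(R)$ sits inside the Jacobian ideal of the auxiliary ring.
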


\begin{proof}
We only prove (1).
(The assertion (2) is proved similarly.)
Assume $\spec R\ne \V(J_{\edd R+1}^k(R))$.
Take a polynomial ring $S=k[X_1, \ldots,X_m]$ over $k$ and an ideal $I$ of $S$ such that $S/I= R$.
We put $e=\edd R$, $A=k[Y]/(Y^2)$, and $B=k[X_1, \ldots,X_m,Y]/(I, Y^2)\cong A\otimes_k R$.
Applying (6)$\Rightarrow$(1) of Theorem \ref{affine ne} to $n=e+1$, we obtain $\sing B\nsubseteq \V(J_e^k(B))$.
Thanks to Proposition \ref{gen. of EandW}, we get $\sing B\subseteq \V(J_{\edd B}^k(B))$, which means $\edd B>e$.
Set $T=k[X_1, \ldots,X_m,Y]$ and $J=(I, Y^2)T$.
There exists a minimal prime ideal $\p/J$ of $B$ such that $\dim B-\dim T/\p>e$, where $\p\in\spec T$.
Now $Y$ is in $\p$.
For $\q:=\p\cap S$, we have $J\subseteq \q T+YT\subseteq \p$.
The isomorphism $T/(\q T+YT) \cong S/\q$ says that $\q T+YT$ is a prime ideal of $T$.
The equality $\q T+YT=\p$ holds since $\p/J$ is a minimal prime ideal of $T/J$.
By Krull's height theorem, we have $\height \p/\q T\le 1$.
Since the ring homomorphisms $S\to T$ and $R=S/I \to A\otimes_kB\cong T/J$ are flat, we have $\height\p=\height \q+\height \p/\q T\le \height \q+1$ and $0=\height(\p/J)\ge \height(\q/I)$; see \cite[Theorem 15.1]{Mat} for instance.
So, $\q/I$ is a minimal prime ideal of $R$ and thus $\dim R-\dim S/\q\le e$.
However $\dim R-\dim S/\q=(\dim B-\dim A)-(m-\height\q)\ge \dim B-(m+1-\height\p)=\dim B-\dim T/\p>e$, which is a contradiction.
\end{proof}

Propositions \ref{gen. of EandW} and \ref{new d+1} asserts that for any affine algebra $R$ over a field $k$ and integer $N\ge 0$, if $\edd R\le N$, then $\sing R\subseteq \V(J_n^k(R))$ and $\spec R=\V(J_{n+1}^k(R))$ hold for any $n\ge N$.
The example below says that the converse does not hold.

\begin{ex}\label{fixringnothold}
Let $S=k[X_1,\ldots X_n]$ be a polynomial ring over a field $k$, $I$ an ideal of $S$ and $m\ge 2$.
Put $R=S/I^m$ and $c=\codim_k R$.
Since $\partial f/\partial X_i$ belongs to $I^{m-1}$ for any $f\in I^m$ and $1\le i\le n$, we have $J_l^k(R)\subseteq I^{m-1}/I^m$ for any $l\ge 1-c$, which means $\spec R=\V(J_l^k(R))$.
On the other hand, $\edd R$ is not necessarily less than or equal to $1-c$ as $I$ is any ideal of $S$.
For example, if $R=k[X,Y,Z]/(XY,XZ)^2$, $\sing R=\spec R=\V(J_l^k(R))$ hold for all $l\ge 0$, but $\edd R=1$, which is greater than $0=1-\codim_k R$.
\end{ex}

The following theorem is the main result of this section.
By Example \ref{fixringnothold}, fixing the ring, the converses of Propositions \ref{gen. of EandW} and \ref{new d+1} does not hold in general.
As seen in Remark \ref{homeo}(2), $\edd R$ depends only on the spectrum of $R$.
From this perspective, the converses of Propositions \ref{gen. of EandW} and \ref{new d+1} also hold.

\begin{thm}\label{origin of main}
Let $k$ be a perfect field.
\begin{enumerate}[\rm(1)]
\item Let $R=k[X_1, \ldots,X_m]$ be a polynomial ring over $k$, and $V$ a closed subset of $\spec R$ of combinatorial dimension $d$.
For every integer $n\ge 0$, the following are equivalent:
\begin{enumerate}[\rm(i)]
\item For any mimimal element $\p$ of $V$, the inequality $d \le \dim R/\p+n$ holds;
\item For any ideal $I$ of $R$ satisfying $V=\V(I)$, $\sing (R/I) \subseteq \V(J_n^k(R/I))$ holds;
\item For any ideal $I$ of $R$ satisfying $V=\V(I)$, $\spec (R/I) = \V(J_n^k(R/I))$ holds;
\item For any ideal $I$ of $R$ satisfying $V=\V(I)$, $\V(\ca^{d+1} (R/I)) \subseteq \V(J_n^k(R/I))$ holds.
\end{enumerate}
\item Let $R=k\llbracket X_1, \ldots,X_m\rrbracket$ be a formal power series ring over $k$, and $V$ a closed subset of $\spec R$ of combinatorial dimension $d$.
For every integer $n\ge 0$, the following are equivalent:
\begin{enumerate}[\rm(i)]
\item For any mimimal element $\p$ of $V$, the inequality $d\le \dim R/\p+n$ holds;
\item For any ideal $I$ of $R$ satisfying $V=\V(I)$, $\sing (R/I) \subseteq \V(J_n(R/I))$ holds;
\item For any ideal $I$ of $R$ satisfying $V=\V(I)$, $\spec (R/I) = \V(J_n(R/I))$ holds;
\item For any ideal $I$ of $R$ satisfying $V=\V(I)$, $\V(\ca^{d+1} (R/I)) \subseteq \V(J_n(R/I))$ holds.
\end{enumerate}
\end{enumerate}
\end{thm}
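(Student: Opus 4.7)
The plan is to combine three ingredients: Corollary \ref{SingV=(d+1)}, which yields $\sing(R/I) = \V(\ca^{d+1}(R/I))$ under the standing hypotheses (quotients of $R$ over the perfect field $k$ are quasi-excellent and admit a dualizing complex in the affine case, or are local in the complete local case); the topological invariance of $\edd$ from Remark \ref{homeo}(2); and the nesting $J_{r+1}^k \subseteq J_r^k$ from Remark \ref{J is w.d.}(3). Corollary \ref{SingV=(d+1)} immediately gives (ii) $\Leftrightarrow$ (iv). By Remark \ref{homeo}(2), condition (i) is equivalent to the single assertion $\edd(R/I) \leq n$. Granting this, the nesting forces $J_n^k(R/I) \subseteq J_{\edd(R/I)}^k(R/I)$ and $J_{n+1}^k(R/I) \subseteq J_{\edd(R/I)+1}^k(R/I)$, so Propositions \ref{gen. of EandW} and \ref{new d+1} yield (ii) and the equality $\spec(R/I) = \V(J_{n+1}^k(R/I))$ intended by (iii) (read in line with Theorem \ref{main equidim}).

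For the reverse directions I argue contrapositively. Write $e := \edd V$ and suppose $e > n$; fix a minimal prime $\p_0$ of $V$ with $\dim R/\p_0 = d - e$, put $I_0 = \bigcap_j \p_j$, and let $c = m - d$. Because $k$ is perfect and $R$ is regular, $\kappa(\p_0)$ is separable over $k$ of transcendence degree $d - e$, so $\Omega_{(R/I_0)/k} \otimes \kappa(\p_0)$ has $\kappa(\p_0)$-dimension $d - e$; the conormal sequence then forces the Jacobian of any presentation of $I_0$ to have rank exactly $c + e$ modulo $\p_0$. Hence some $(c+e)$-minor lies outside $\p_0$, giving $J_e^k(R/I_0) \nsubseteq \p_0/I_0$, and the nesting (with $e \geq n + 1$) yields $J_{n+1}^k(R/I_0) \supseteq J_e^k(R/I_0)$; thus $\p_0/I_0 \notin \V(J_{n+1}^k(R/I_0))$, refuting (iii).

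For (ii) $\Rightarrow$ (i) the radical $I_0$ is inadequate because $(R/I_0)_{\p_0}$ is a field, so I pass to a controlled non-reduced enlargement. Pick a generating set $f_1, \ldots, f_N$ of $\p_0 \subseteq R$ with $N \geq t := c + e = \height \p_0$ such that $f_1, \ldots, f_t$ form a regular system of parameters for $R_{\p_0}$; then $df_1, \ldots, df_t$ are linearly independent modulo $\p_0$. Let $K = \bigcap_{j \neq 0} \p_j$ and define $I = (f_1^2, f_2, \ldots, f_N) \cdot K$. Then $\V(I) = \V(\p_0) \cup \V(K) = V$, and $(R/I)_{\p_0/I}$ is Artinian local with $f_1$ a nonzero nilpotent, so $\p_0/I \in \sing(R/I)$. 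For the Jacobian, take the generating set of $I$ consisting of all products $f_j^{a_j} k$ (with $a_1 = 2$, $a_j = 1$ for $j \geq 2$, and $k$ ranging over generators of $K$); by the product rule, partials of generators involving $f_1^2$ lie in $\p_0$, while partials of the others reduce modulo $\p_0$ to $k(\partial f_j / \partial X_u)$. Choosing $k \in K \setminus \p_0$, the resulting submatrix has rank $t - 1 = c + e - 1$ modulo $\p_0$ by linear independence of $df_2, \ldots, df_t$. Since $n < e$ yields $c + n \leq c + e - 1$, some $(c+n)$-minor avoids $\p_0$, so $J_n^k(R/I) \nsubseteq \p_0/I$, exhibiting a singular prime outside $\V(J_n^k(R/I))$ and refuting (ii). The hard step is this rank computation: the asymmetric shape of $(f_1^2, f_2, \ldots, f_N)$ is essential, since a symmetric modification (e.g.\ $\p_0^2 \cdot K$) would push every partial into $\p_0$ and collapse the argument. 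Part (2) follows identically, substituting completed tensor products for tensor products.
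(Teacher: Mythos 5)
Your skeleton coincides with the paper's: (ii)$\Leftrightarrow$(iv) comes from Corollary \ref{SingV=(d+1)}, the implications (i)$\Rightarrow$(ii),(iii) come from Propositions \ref{gen. of EandW} and \ref{new d+1} together with the nesting in Remark \ref{J is w.d.}(3) (and your reading of (iii) with $J_{n+1}$ is what the paper in fact proves and uses in Corollary \ref{main cor of edd}), and the converses are obtained by exhibiting an ideal $I$ with $\V(I)=V$ and a minimal prime $\p_0$ of maximal excess at which the Jacobian has too large a rank. Your mechanism for the rank input (conormal sequence plus separability over the perfect field) replaces the paper's device (a regular system of parameters $f_1,\dots,f_h$ of $R_{\p_0}$, the Jacobian criterion for the equidimensional complete intersection $R/(f_1,\dots,f_h)$, and generators rescaled by $\gamma\in\bigcap_{i\ge1}\p_i\setminus\p_0$), and in the affine case that part of your argument is sound. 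But there are two genuine gaps.

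First, in your refutation of (ii) the assertion that ``$(R/I)_{\p_0/I}$ is Artinian local with $f_1$ a nonzero nilpotent'' is unproved and can fail for data meeting exactly your hypotheses: nothing prevents an extra generator $f_j$ with $j>t$ from being, locally at $\p_0$, a unit multiple of $f_1$ modulo the others. Concretely, in $k[x,y,z]$ with $\p_0=(x,y)$ take $f_1=x$, $f_2=y$, $f_3=x(1+z)$; these generate $\p_0$ and $f_1,f_2$ form a regular system of parameters of $R_{\p_0}$, yet $(f_1^2,f_2,f_3)R_{\p_0}=(x^2,y,x(1+z))R_{\p_0}=\p_0R_{\p_0}$, so $(R/I)_{\p_0}$ is a field, $\p_0/I\notin\sing(R/I)$, and your candidate counterexample to (ii) evaporates. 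The repair is essentially the paper's construction: make the first factor contain $\p_0^2$, e.g.\ use $(f_1^2,f_2,\dots,f_t)+\p_0^2$ (the paper uses $\q=(f_1,\dots,f_{h-1})+\p^2$ and then intersects with $\p_1\cap\cdots\cap\p_s$), so that properness at $\p_0$ follows from Nakayama; the added generators lie in $\p_0^2$, their partials lie in $\p_0$, and your rank count modulo $\p_0$ is unaffected.

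Second, ``Part (2) follows identically, substituting completed tensor products'' is not correct as stated (no tensor products occur in your argument). For $R=k\llbracket X_1,\dots,X_m\rrbracket$ the field $\kappa(\p_0)$ is in general not finitely generated over $k$, $\Omega_{R/k}$ is not finitely generated, and $J_n(R/I)$ is defined through formal partial derivatives rather than as a Fitting ideal of the K\"ahler differentials; hence the separability/transcendence-degree computation that gives rank $c+e$ at $\p_0$, as well as the linear independence of $df_1,\dots,df_t$ modulo $\p_0$, does not transfer verbatim. This is precisely where the paper inserts a different input, invoking \cite[Lemma 4.3, and Propositions 4.4 and 4.5]{W} to get $\V(\jac(R/J))\subseteq\sing(R/J)$ for the equidimensional complete intersection $R/J$; your proof of part (2) needs an analogous substitute.
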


\begin{proof}
Similarly, we only show the assertion (1).
For any ideal $I$ of $R$ such that $V=\V(I)$, $\dim (R/I)=d$ holds.
Thanks to Corollary \ref{SingV=(d+1)}, there is an equality $\sing (R/I)=\V(\ca^{d+1} (R/I))$, which induces (ii)$\Leftrightarrow$(iv).
Suppose that $d \le \dim R/\p+n$ holds for any mimimal element $\p$ of $V$.
Then $e=\edd (R/I)\le n$ for any ideal $I$ of $R$ satisfying $V=\V(I)$.
It follows from Remark \ref{J is w.d.}(3), Propositions \ref{gen. of EandW} and \ref{new d+1} that $\sing (R/I) \subseteq \V(J_e^k(R/I))\subseteq \V(J_n^k(R/I))$ and $\spec (R/I)=\V(J_{e+1}^k(R/I))\subseteq \V(J_{n+1}^k(R/I))$ hold.
Hence (i)$\Rightarrow$(ii) and (i)$\Rightarrow$(iii) hold.

To prove the converses, assume the existence of a mimimal element $\p$ of $V$ such that $d> \dim R/\p+n$.
Put $h=\height\p=m-\dim R/\p$.
Let $\p=\p_0,\p_1,\ldots,\p_s$ be all the minimal elements of $V$.
We see that $d=\dim R/\p_u$ for some $1\le u\le s$.
Then $\p\ne 0$ and $\p_u\ne 0$ since $d> \dim R/\p$ and $\p$ is minimal.
Therefore $h-n=m-\dim R/\p-n>m-d=m-\dim R/\p_u=\height\p_u>0$.
Since $R_\p$ is a regular local ring of dimension $h$, we can choose $f_1,\ldots,f_h\in\p$ such that $\height(f_1,\ldots,f_h)=h$ and $(f_1,\ldots,f_h)R_\p=\p R_\p$; see the proof of \cite[Propositon 4.4]{W}.
Put $J=(f_1,\ldots,f_h)$.
Then $\p/J\notin\sing(R/J)$ and $R/J$ is equidimensional as the heights of all the minimal prime ideals of $J$ are $h$ by Krull's height theorem.
Now $k$ is perfect.
It follows from \cite[Corollary 16.20]{E} (and in the local case, it follows from \cite[Lemma 4.3, and Propositions 4.4 and 4.5]{W}) that $\V(\jac_k(R/J))\subseteq \sing(R/J)$ and thus $\jac_k(R/J)\nsubseteq\p/J$.
We ontain $I_h(\partial f_j/\partial X_i)\nsubseteq\p$.
We take $\gamma\in \bigcap_{i=1}^s\p_i\setminus\p$.

We prove that condition (ii) does not hold.
Let $U_j$ be the $m\times (h-1)$ submatrix obtained by removing the $j$-th column of $(\partial f_j/\partial X_i)$, that is to say, $U_j$ is the Jacobian matrix of $f_1,\ldots,f_{i-1},f_{i+1},\ldots,f_h$.
By the Laplace expansion, we get $I_h(\partial f_j/\partial X_i)\subseteq \sum_{j=1}^h I_{h-1}(U_j)$.
We may assume $I_{h-1}(U_h)\nsubseteq\p$ as $I_h(\partial f_j/\partial X_i)\nsubseteq\p$.
Set $\q=(f_1,\ldots,f_{h-1})+\p^2$ and $I=\q\cap\p_1\cap\cdots\cap\p_s$.
Then $\V(I)=V$ and $\q R_\p\subsetneq \p R_\p$ by \cite[Corollary of Theorem 2.2]{Mat} and $(f_1,\ldots,f_{h-1})R_\p\subsetneq \p R_\p$.
There is an isomorphism $R_\p/I R_\p\cong R_\p/\q R_\p$, and the right side is an Artinian local ring that is not a field.
This means that $\p/I$ belongs to $\sing (R/I)$.
We put $\alpha_r=\gamma f_r\in I$ for each $1\le r\le h-1$.
Calculating the Jacobian matrix of the system of generators of $I$ obtained by extending $\alpha_1,\ldots,\alpha_{h-1}$, we see that $I_{n+m-d} (\partial \alpha_j/\partial X_i)(R/I)$ is contained in $J_n^k(R/I)$ since $h-1\ge n+m-d$.
On the other hand, for any $1\le r\le h-1$ and any $1\le t\le m$, the equality
$$
\dfrac{\partial \alpha_r}{\partial X_t}=\gamma \dfrac{\partial f_r}{\partial X_t}+\dfrac{\partial \gamma}{\partial X_t} f_r
$$
holds.
As $f_r$ are in $\p$ for all $1\le r\le h-1$, it is seen that $\gamma^{n+m-d} I_{n+m-d} (U_h)\subseteq I_{n+m-d} (\partial \alpha_j/\partial X_i)+\p$.
By $I_{n+m-d} (U_h)\supseteq I_{h-1}(U_h)\nsubseteq\p$ and $\gamma\notin\p$, 
We conclude that $I_{n+m-d} (\partial \alpha_j/\partial X_i)$ is not contained in $\p$, which yields $\p/I\notin \V(J_n^k(R/I))$.

Show that condition (iii) does not hold.
We put $J=\p\cap\p_1\cap\cdots\cap\p_s$ and $\beta_r=\gamma f_r$ for all $1\le r\le h$.
An analogous argument shows that $I_{n+m-d+1} (\partial \beta_j/\partial X_i)(R/J)$ is contained in $J_{n+1}^k(R/J)$ and that $\gamma^{n+m-d+1} I_{n+m-d+1} (\partial f_j/\partial X_i)$ is contained in $I_{n+m-d+1} (\partial \beta_j/\partial X_i)+\p$.
Noting that $I_h(\partial f_j/\partial X_i)$ is not contained in $\p$, and hence neither $I_{n+m-d+1} (\partial f_j/\partial X_i)$, similarly, we have $I_{n+m-d+1} (\partial \beta_j/\partial X_i)\nsubseteq\p$, which implies $\p/J\notin \V(J_{n+1}^k(R/J))$.
\end{proof}

The following result is a direct corollary of Theorem \ref{origin of main}.

\begin{cor}\label{main cor of edd}
Let $k$ be a perfect field and let $n\ge 0$ be an integer.
\begin{enumerate}[\rm(1)]
\item For an affine $k$-algebra $R$ of dimension $d$, the following are equivalent:
\begin{enumerate}[\rm(i)]
\item The inequality $\edd R\le n$ holds;
\item For any field $l$ and any affine $l$-algebra $S$ with $\spec S\cong\spec R$, $\sing S \subseteq \V(J_n^l(S))$ holds;
\item For any field $l$ and any affine $l$-algebra $S$ with $\spec S\cong\spec R$, $\spec S = \V(J_{n+1}^l(S))$ holds;
\item For any field $l$ and any affine $l$-algebra $S$ with $\spec S\cong\spec R$, $J_n^l(S) \subseteq \sqrt{\ca^{d+1}(S)}$ holds;
\item For any affine $k$-algebra $S$ with $\spec S\cong\spec R$, $\sing S \subseteq \V(J_n^k(S))$ holds;
\item For any affine $k$-algebra $S$ with $\spec S\cong\spec R$, $\spec S = \V(J_{n+1}^k(S))$ holds;
\item For any affine $k$-algebra $S$ with $\spec S\cong\spec R$, $J_n^k(S) \subseteq \sqrt{\ca^{d+1}(S)}$ holds.
\end{enumerate}
\item For an equicharacteristic complete local ring $(R,\m,k)$ of dimension $d$, the following are equivalent:
\begin{enumerate}[\rm(i)]
\item The inequality $\edd R\le n$ holds;
\item For any equicharacteristic complete local ring $S$ with $\spec S\cong\spec R$, $\sing S \subseteq \V(J_n(S))$;
\item For any equicharacteristic complete local ring $S$ with $\spec S\cong\spec R$, $\spec S = \V(J_{n+1}(S))$;
\item For any equicharacteristic complete local ring $S$ with $\spec S\cong\spec R$, $J_n(S) \subseteq \sqrt{\ca^{d+1}(S)}$;
\item For any equicharacteristic complete local ring $S$ with residue field $k$ such that $\spec S\cong\spec R$, $\sing S \subseteq \V(J_n(S))$ holds;
\item For any equicharacteristic complete local ring $S$ with residue field $k$ such that $\spec S\cong\spec R$, $\spec S = \V(J_{n+1}(S))$ holds;
\item For any equicharacteristic complete local ring $S$ with residue field $k$ such that $\spec S\cong\spec R$, $J_n(S) \subseteq \sqrt{\ca^{d+1}(S)}$ holds.
\end{enumerate}
\end{enumerate}
\end{cor}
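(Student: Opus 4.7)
The plan is to deduce Corollary \ref{main cor of edd} directly from Theorem \ref{origin of main}, together with Propositions \ref{gen. of EandW} and \ref{new d+1} and Corollary \ref{SingV=(d+1)}. I will describe the argument for part (1); part (2) is analogous, replacing polynomial rings by formal power series rings throughout and invoking Theorem \ref{origin of main}(2).

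The backbone is the chain of equivalences (i) $\Leftrightarrow$ (v) $\Leftrightarrow$ (vi) $\Leftrightarrow$ (vii), which I would extract from Theorem \ref{origin of main}(1) as follows. For any affine $k$-algebra $S$ with $\spec S \cong \spec R$, I would choose a presentation $S \cong k[X_1,\ldots,X_m]/I$ and set $V = \V(I) \subseteq \spec k[X_1,\ldots,X_m]$. Then $V$ has combinatorial dimension $d = \dim S = \dim R$, and the minimal elements of $V$ correspond bijectively to the minimal primes of $S$ with matching residue dimensions. Hence condition (i) of Theorem \ref{origin of main}---that $d \le \dim(k[X_1,\ldots,X_m]/\p) + n$ for every minimal $\p$ of $V$---is precisely $\edd S \le n$. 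By Remark \ref{homeo}(2), $\edd$ depends only on the homeomorphism class of the spectrum, so this condition is equivalent to $\edd R \le n$. The remaining three conditions of Theorem \ref{origin of main} applied to such $V$ are exactly the statements (v), (vi), (vii) of the corollary, so the four-fold equivalence follows at once.

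The trivial specializations $l = k$ yield $(ii) \Rightarrow (v)$, $(iii) \Rightarrow (vi)$, and $(iv) \Rightarrow (vii)$. It remains to establish $(i) \Rightarrow (ii), (iii), (iv)$. Given (i), let $l$ be an arbitrary field and $S$ an affine $l$-algebra with $\spec S \cong \spec R$. By Remark \ref{homeo}(2), $\edd S = \edd R \le n$. Proposition \ref{gen. of EandW} yields $\sing S \subseteq \V(J_{\edd S}^l(S))$, and Remark \ref{J is w.d.}(3) refines this to $\V(J_n^l(S))$, proving (ii). Proposition \ref{new d+1} gives $\spec S = \V(J_{\edd S+1}^l(S)) \subseteq \V(J_{n+1}^l(S)) \subseteq \spec S$, proving (iii). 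For (iv), note that affine algebras over any field are quasi-excellent, so Corollary \ref{SingV=(d+1)} identifies $\sqrt{\ca^{d+1}(S)}$ with $\bigcap_{\p \in \sing S}\p$; hence (ii) and (iv) are equivalent for this $S$, and (iv) follows from (ii).

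The proof is essentially bookkeeping and no genuine obstacle arises once Theorem \ref{origin of main} is available. The one subtlety worth flagging is that conditions (ii), (iii), (iv) quantify over \emph{arbitrary} fields $l$, not necessarily perfect, so the forward implication from (i) must be routed through Propositions \ref{gen. of EandW} and \ref{new d+1}---whose proofs do not invoke perfection---rather than through Theorem \ref{origin of main} directly, which is only stated for perfect base fields.
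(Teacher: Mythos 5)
Your proposal is correct and follows essentially the same route as the paper: the (v)/(vi)/(vii) $\Rightarrow$ (i) directions are obtained by presenting the ring and invoking Theorem \ref{origin of main} (exactly as the paper does for (v) $\Rightarrow$ (i)), while (i) $\Rightarrow$ (ii)/(iii) goes through Remark \ref{homeo}(2) together with Propositions \ref{gen. of EandW} and \ref{new d+1}, and the Ext conditions are matched with the singular-locus conditions via Corollary \ref{SingV=(d+1)}. Your closing remark about arbitrary (possibly imperfect) fields $l$ is also consistent with the paper, which likewise routes (i) $\Rightarrow$ (ii) through Proposition \ref{gen. of EandW} rather than through Theorem \ref{origin of main}.
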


\begin{proof}
We only show (i)$\Leftrightarrow$(ii)$\Leftrightarrow$(v) in (1).
Other implications are proven in a similar methods.
Suppose that $\edd R\le n$ holds.
Let $S$ be an affine algebra over a field $l$ such that $\spec S\cong\spec R$.
Remark \ref{homeo}(2) yields $\edd (A/I)=\edd R\le n$.
Proposition \ref{gen. of EandW} deduces that $\sing S \subseteq \V(J_n^l(S))$ holds.
We have (i)$\Rightarrow$(ii).
It is clear that (ii)$\Rightarrow$(v) holds.
Assume that $\sing S \subseteq \V(J_n^k(S))$ holds for any affine $k$-algebra $S$ such that $\spec S\cong\spec R$.
Take a polynomial ring $A=k[X_1, \ldots,X_m]$ over $k$ and an ideal $I$ of $A$ such that $A/I=R$.
If an ideal $J$ of $A$ satisfies $\V(J)=\V(I)$, then $\spec (A/J)\cong\V(J)=\V(I)\cong\spec (A/I)=\spec R$.
By assumption, $\sing (A/J) \subseteq \V(J_n^k(A/J))$ holds.
Theorem \ref{origin of main} says that for any mimimal element $\q$ of $\V(I)$, the inequality $\dim R \le \dim B/\q+n$ holds, which means $\edd R\le n$.
We obtain (v)$\Rightarrow$(i).
\end{proof}

In the case of $n=0$, the conditions (i), (iv), (v) and (vii) of Corollary \ref{main cor of edd} are rewritten as follows. 

\begin{cor}\label{equidimversionofmain}
Let $k$ be a perfect field.
\begin{enumerate}[\rm(1)]
\item For an affine $k$-algebra $R$ of dimension $d$, the following are equivalent:
\begin{enumerate}[\rm(i)]
\item $R$ is equidimensional;
\item For any affine algebra $S$ over a filed with $\spec S\cong\spec R$, there exists in integer $m>0$ such that $\jac_k(S)^m \Ext^{d+1}_{S}(M,N)=0$ holds for all $S$-modules $M,N$;
\item For any affine $k$-algebra $S$ with $\spec S\cong\spec R$, $\sing S=\V(\jac_k(S))$ holds;
\item For any affine $k$-algebra $S$ with $\spec S\cong\spec R$, $\sqrt{\ca^{d+1}(S)}=\sqrt{\jac_k(S)}$ holds.
\end{enumerate}
\item For an equicharacteristic complete local ring $(R,\m,k)$ of dimension $d$, the following are equivalent:
\begin{enumerate}[\rm(i)]
\item $R$ is equidimensional;
\item For any equicharacteristic complete local ring $S$ with $\spec S\cong\spec R$, there exists $m>0$ such that $\jac(S)^m \Ext^{d+1}_{S}(M,N)=0$ holds for all $S$-modules $M,N$;
\item For any equicharacteristic complete local ring $S$ with residue field $k$ such that $\spec S\cong\spec R$, $\sing S=\V(\jac(S))$ holds;
\item For any equicharacteristic complete local ring $S$ with residue field $k$ such that $\spec S\cong\spec R$, $\sqrt{\ca^{d+1}(S)}=\sqrt{\jac(S)}$ holds.
\end{enumerate}
\end{enumerate}
\end{cor}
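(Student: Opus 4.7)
The plan is to derive this corollary directly as the $n = 0$ specialization of Corollary \ref{main cor of edd}, supplemented by three additional ingredients: the classical Jacobian criterion (which supplies the reverse inclusion $\V(\jac_k(S)) \subseteq \sing S$ for an equidimensional affine algebra over a perfect field), the identification $\sing S = \V(\ca^{d+1}(S))$ from Corollary \ref{SingV=(d+1)}, and the Iyengar--Takahashi annihilation result \cite[Theorem 1.1]{IT2} for the module-theoretic statement in (ii). I will describe part (1) in detail; part (2) will follow by the same argument with the complete local version of the Jacobian criterion in place of the affine one.

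For (i) $\Leftrightarrow$ (iii), I would observe that the inclusion $\sing S \subseteq \V(\jac_k(S))$ is precisely condition (v) of Corollary \ref{main cor of edd} at $n = 0$. For the reverse inclusion under hypothesis (i), I would use Remark \ref{homeo}(2) to transport equidimensionality from $R$ to $S$, and then invoke the classical Jacobian criterion over the perfect field $k$ via \cite[Corollary 16.20]{E} (the same reference already used inside the proof of Theorem \ref{origin of main}) to obtain $\V(\jac_k(S)) \subseteq \sing S$. The converse implication is immediate, since the equality in (iii) trivially yields the inclusion (v) of Corollary \ref{main cor of edd}. For (iii) $\Leftrightarrow$ (iv), I would apply Corollary \ref{SingV=(d+1)} (available because $S$ is quasi-excellent and admits a dualizing complex) to rewrite $\sing S = \V(\ca^{d+1}(S))$, so that the equality in (iii) is equivalent to $\V(\ca^{d+1}(S)) = \V(\jac_k(S))$, which is in turn equivalent to the equality of radicals in (iv).

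For (i) $\Leftrightarrow$ (ii), I would invoke \cite[Theorem 1.1]{IT2}: when $S$ is equidimensional, some positive power of its Jacobian ideal annihilates $\Ext^{d+1}_S(M, N)$ for all $S$-modules $M, N$, not necessarily finitely generated. Combined with Remark \ref{homeo}(2), this yields (i) $\Rightarrow$ (ii). Conversely, restricting (ii) to finitely generated $M, N$ gives $\jac(S)^m \subseteq \ca^{d+1}(S)$ for some $m > 0$, hence $\jac(S) \subseteq \sqrt{\ca^{d+1}(S)}$; this is exactly condition (iv) of Corollary \ref{main cor of edd} at $n = 0$, so $\edd R \le 0$.

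The main obstacle I anticipate is bookkeeping rather than mathematical depth: one has to verify that each auxiliary ring $S$ appearing in (ii)--(iv) falls under the hypotheses of Corollaries \ref{main cor of edd} and \ref{SingV=(d+1)} (quasi-excellence, existence of a dualizing complex, and perfectness of the base field where needed), and that the Jacobian ideals computed over the various base fields behave compatibly with the invocations of \cite[Theorem 1.1]{IT2} and the Jacobian criterion, so that a single uniform argument covers all the $S$ in question.
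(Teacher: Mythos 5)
Your proposal is correct and follows essentially the same route as the paper: it specializes Corollary \ref{main cor of edd} to $n=0$ for the implications (ii)$\Rightarrow$(i) and (iii)$\Rightarrow$(i), uses $\sing S=\V(\ca^{d+1}(S))$ (Corollary \ref{SingV=(d+1)}, noting $\dim S=d$) to get (iii)$\Leftrightarrow$(iv), invokes \cite[Theorem 1.1]{IT2} with Remark \ref{homeo}(2) for (i)$\Rightarrow$(ii), and supplies the reverse inclusion $\V(\jac)\subseteq\sing$ for (i)$\Rightarrow$(iii) via the Jacobian criterion (the paper cites \cite[Corollary 16.20]{E} in the affine case and \cite{W} in the complete local case, matching your ``complete local version''). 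No gaps.
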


\begin{proof}
Applying Corollary \ref{main cor of edd} to $n=0$, the implications (ii)$\Rightarrow$(i)$\Leftarrow$(iii) hold.
An analogous argument to the former part of the proof of Theorem \ref{origin of main} shows $\sing (S)=\V(\ca^{\dim S+1} (S))$, where $S$ is either an affine $k$-algebra or an equicharacteristic complete local ring with residue field $k$.
So, we have (iii)$\Leftrightarrow$(iv).
We see by Remark \ref{homeo}(2) and \cite[Theorem 1.1]{IT2} that (i)$\Rightarrow$(ii) holds.
Suppose that $R$ is an equidimensional equicharacteristic complete local ring with residue field $k$.
It follows from Remark \ref{homeo}(2) and \cite[Lemma 4.3, and Propositions 4.4 and 4.5]{W} that $\sing (S)\supseteq V(\jac S)$ holds for any equicharacteristic complete local ring $S$ with residue field $k$ such that $\spec S\cong\spec R$.
By this and Corollary \ref{main cor of edd}, we have  $\sing (S)=V(\jac S)$. 
If $R$ is an equidimensional affine $k$-algebra, the equality $\sing (S)=\V(\jac_k S)$ follows from \cite[Corollary 16.20]{E}.
Thus we get (i)$\Rightarrow$(iii).
\end{proof}

We close this section by providing an example related to Corollary \ref{main cor of edd}.

\begin{ex}
Let $R=k[X,Y_1,\ldots,Y_{n+1}]/(XY_1,\ldots,XY_{n+1})$ be a quotient of a polynomial ring over a perfect field $k$, where $n\ge 1$.
Then $\edd R=n$.
We see that $\sing R\subseteq\V(J_n^k(R))$ and $\spec R=\V(J_{n+1}^k(R))$ hold; see Example \ref{nhensu example}.
According to Corollary \ref{main cor of edd}, there exist affine $k$-algebras $S$ and $T$ such that $\sing S\nsubseteq\V(J_{n-1}^k(S))$ and $\spec T\ne \V(J_n^k(T))$.
By Example \ref{nhensu example}, one has $\spec R\ne \V(J_n^k(R))$.
So $T$ can be chosen as $R$ itself.
On the other hand, we get $\sing R\subseteq\V(J_{n-1}^k(R))$ by Example \ref{nhensu example}.
Now let $S=k[X,Y_1,\ldots,Y_{n+1}]/(XY_1^2,XY_2,\ldots,XY_{n+1})$ be a quotient of a polynomial ring over $k$.
It is seen that $\spec S\cong\spec R$ and $\sing S=\V((Y_1,\ldots,Y_{n+1})S)$.
The ideal $J_{n-1}^k(S)$ is generated by the $n$-minors of 
\begin{align*}
\begin{pmatrix}
   Y_1^2 & 2XY_1 & 0 & \cdots & 0 \\
   Y_2 & 0 & X & \ddots & \vdots \\
   \vdots & \vdots & \ddots & \ddots & 0 \\
   Y_{n+1} & 0 & \cdots & 0 & X \\
\end{pmatrix},
\end{align*}
which contains $X^nS$.
We conclude that $\sing S\nsubseteq\V(J_{n-1}^k(S))$ as $(Y_1,\ldots,Y_{n+1})S$ does not contain $X^nS$.
\end{ex}

\section{Uniqueness of the Jacobian ideal}

This section focuses on whether the Jacobian ideal is uniquely determined.
In the case of a complete local ring in equicharacteristic, when there are two presentations of the ring, the Jacobian ideal can be directly computed through an isomorphism.
After presenting two lemmas, we will prove this.

\begin{lem}\label{matrixlemma}
Let $R$ be a ring, $m, n\ge 0$, and $A=(a_{ij})_{1\le i\le m, 1\le j\le n}$ an $m\times n$ matrix over $R$.
Let 
\begin{align*}
B=
\begin{pmatrix}
   a_{11} & \cdots &  a_{1n} & \sum_{j=1}^n c_j a_{1j}+ b_1\\
   \vdots & \ddots & \vdots & \vdots \\
   a_{m1} & \cdots &  a_{mn} & \sum_{j=1}^n c_j a_{mj}+ b_m \\
\end{pmatrix}
\end{align*}
be an $(m+1)\times n$ matrix, where $b_i,c_j\in R$ for all $1\le i\le m$ and all $1\le j\le n$.
Then for any integer $r$, $I_r(A)\subseteq I_r(B)\subseteq I_r(A)+(b_1,\ldots,b_m)R$.
\end{lem}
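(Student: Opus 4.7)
The plan is to verify the two inclusions separately, relying only on the column-multilinearity of the determinant together with the vanishing of a determinant with two equal columns.

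For the inclusion $I_r(A)\subseteq I_r(B)$, I note that every $r\times r$ submatrix of $A$ arises as an $r\times r$ submatrix of $B$ by selecting the same $r$ rows and the same $r$ columns (all drawn from the first $n$ columns of $B$). Hence every generator of $I_r(A)$ is already a generator of $I_r(B)$. The degenerate cases ($r\le 0$ or $r$ exceeding the allowed range) are handled trivially by the conventions on $I_r$.

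For the inclusion $I_r(B)\subseteq I_r(A)+(b_1,\ldots,b_m)R$, fix an $r\times r$ submatrix $C$ of $B$. If $C$ does not use the last column of $B$, then $C$ is an $r\times r$ submatrix of $A$ and $\det C\in I_r(A)$. Otherwise write the chosen rows as $i_1<\cdots<i_r$ and the non-last chosen columns as $j_1<\cdots<j_{r-1}$, so that the last column of $C$ is $\bigl(\sum_{j=1}^n c_j a_{i_k j}+b_{i_k}\bigr)_{k=1}^r$. Using multilinearity of $\det$ in the last column,
\[
\det C \;=\; \sum_{j=1}^n c_j\det C_j \;+\; \det C',
\]
where $C_j$ is obtained from $C$ by replacing its last column with the column $(a_{i_k j})_{k}$, and $C'$ by replacing it with $(b_{i_k})_{k}$. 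If $j\in\{j_1,\ldots,j_{r-1}\}$ then $C_j$ has two equal columns and $\det C_j=0$; if $j\notin\{j_1,\ldots,j_{r-1}\}$ then $C_j$ is an $r\times r$ submatrix of $A$, so $\det C_j\in I_r(A)$. Laplace expansion of $\det C'$ along its last column gives $\det C'\in(b_1,\ldots,b_m)R$. Combining these observations yields $\det C\in I_r(A)+(b_1,\ldots,b_m)R$.

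There is no real obstacle here; the argument is a routine application of column multilinearity and alternation. The only mild care is to handle $r>n$ correctly: in that case every $r\times r$ submatrix of $B$ is forced to include the last column, so the second case above applies to every generator, which is consistent with $I_r(A)=0$ when $r>\min\{m,n\}$.
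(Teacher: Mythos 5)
Your proof is correct and follows essentially the same route as the paper: the paper's explicit Leibniz-expansion computation with pairwise cancellation is just the unpacked form of your appeal to multilinearity in the last column plus the vanishing of a determinant with a repeated column. Your handling of the degenerate case $r>n$ and of $\det C'$ via Laplace expansion matches the paper's intent, so nothing further is needed.
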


\begin{proof}
It is clear that $I_r(A)\subseteq I_r(B)$.
When $r\le 1$, we can easily see that $I_r(B)\subseteq I_r(A)+(b_1,\ldots,b_m)R$. 
Assume $r\ge 2$.
To prove lemma, we only need to consider the $r$-th minors that include the $(n+1)$-th column.
That is, we have only to show that for all $1\le i_1<\cdots<i_r\le m$ and $1\le j_1<\cdots<j_{r-1}<j_r=n+1$, the element $\sum_{\sigma\in S_r} (\operatorname{sgn}\sigma) \prod_{l=1}^{r-1} a_{i_{\sigma (l)} j_l} 
\cdot (\sum_{k=1}^n c_k a_{i_{\sigma (r)} k}+ b_{i_{\sigma (r)}})$ belongs to $I_r(A)+(b_1,\ldots,b_m)R$.
Fix $1\le k\le n$.
If $k=j_l$ for some $1\le l\le r-1$, $(\operatorname{sgn}\sigma) \prod_{l=1}^{r-1} a_{i_{\sigma (l)} j_l}  a_{i_{\sigma (r)} k}+(\operatorname{sgn}\tau) \prod_{l=1}^{r-1} a_{i_{\tau (l)} j_l}  a_{i_{\tau (r)} k}=0$ for any $\sigma\in S_r$ and $\tau=\sigma \circ (l\ r)$.
Therefore we have 
\begin{align*}
& \sum_{\sigma\in S_r} (\operatorname{sgn}\sigma) \prod_{l=1}^{r-1} a_{i_{\sigma (l)} j_l} 
\cdot (\sum_{k=1}^n c_k a_{i_{\sigma (r)} k}+ b_{i_{\sigma (r)}}) \\
=& \sum_{k=1}^n c_k \sum_{\sigma\in S_r} (\operatorname{sgn}\sigma) \prod_{l=1}^{r-1} a_{i_{\sigma (l)} j_l}  a_{i_{\sigma (r)} k} + 
\sum_{\sigma\in S_r} (\operatorname{sgn}\sigma) \prod_{l=1}^{r-1} a_{i_{\sigma (l)} j_l}   b_{i_{\sigma (r)}} \\
=& \sum_{\substack{1\le k\le n \\ k\notin\{j_1,\ldots,j_{r-1}\}}} c_k \sum_{\sigma\in S_r} (\operatorname{sgn}\sigma) \prod_{l=1}^{r-1} a_{i_{\sigma (l)} j_l}  a_{i_{\sigma (r)} k} + 
\sum_{\sigma\in S_r} (\operatorname{sgn}\sigma) \prod_{l=1}^{r-1} a_{i_{\sigma (l)} j_l}   b_{i_{\sigma (r)}} \\
\in &\  (c_1,\ldots,c_n) I_r(A)+(b_1,\ldots,b_m)R.
\end{align*}
The proof is now completed.
\end{proof}

\begin{lem}\label{independent of generator}
Let $R$ be a ring, and let $S$ be either the polynomial ring $R[X_1, \ldots,X_l]$ or the formal power series ring $R\llbracket X_1, \ldots,X_l\rrbracket$.
Let $f_1,\ldots,f_m,g_1,\ldots,g_n\in S$.
If $I=(f_1,\ldots,f_m)=(g_1,\ldots,g_n)$, then $I_r(\partial f_j/\partial X_i)+I=I_r(\partial g_j/\partial X_i)+I$ for any integer $r$.
\end{lem}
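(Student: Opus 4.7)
The plan is to reduce the equality to the single inclusion $I_r(\partial f_j/\partial X_i)+I\subseteq I_r(\partial g_j/\partial X_i)+I$ and then to obtain this inclusion by building one combined matrix out of the two Jacobian matrices and applying Lemma \ref{matrixlemma} column by column. By symmetry in $\{f_j\}$ and $\{g_k\}$, the reverse inclusion will follow from the same argument.

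First I would use $(f_1,\ldots,f_m)\subseteq(g_1,\ldots,g_n)$ to write $f_j=\sum_{k=1}^n c_{jk}g_k$ for some $c_{jk}\in S$. Differentiating (legitimately, since $S$ is either polynomials or power series), this gives
\[
\frac{\partial f_j}{\partial X_i}=\sum_{k=1}^n c_{jk}\,\frac{\partial g_k}{\partial X_i}+\sum_{k=1}^n \frac{\partial c_{jk}}{\partial X_i}\,g_k,
\]
where the second sum lies in $I$ for every $i,j$. In other words, the $j$-th column of $(\partial f_j/\partial X_i)$ is an $S$-linear combination of the columns of $(\partial g_j/\partial X_i)$, up to an error vector whose entries all belong to $I$.

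Now I would form the $l\times(n+m)$ matrix $C$ obtained by appending all columns of $(\partial f_j/\partial X_i)$ to the matrix $A=(\partial g_j/\partial X_i)$. By the previous display, each of the $m$ appended columns has exactly the shape allowed by Lemma \ref{matrixlemma} (linear combination of existing columns plus an error vector with components in $I$). Iterating Lemma \ref{matrixlemma} once per appended column then yields, for every integer $r$,
\[
I_r(A)\subseteq I_r(C)\subseteq I_r(A)+I.
\]
On the other hand, the columns of $(\partial f_j/\partial X_i)$ sit inside $C$, so $I_r(\partial f_j/\partial X_i)\subseteq I_r(C)$. Combining these gives $I_r(\partial f_j/\partial X_i)\subseteq I_r(\partial g_j/\partial X_i)+I$, and hence $I_r(\partial f_j/\partial X_i)+I\subseteq I_r(\partial g_j/\partial X_i)+I$. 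Swapping the roles of the $f$'s and the $g$'s in the argument gives the other inclusion.

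I do not foresee any serious obstacle: Lemma \ref{matrixlemma} is tailor-made to absorb the error terms produced by the product rule, and the only computational point is the identity for $\partial f_j/\partial X_i$, which is valid in both the polynomial and formal power series settings. The one thing to be careful about is that the coefficients $c_{jk}$ used when we reverse the roles of $f$ and $g$ are a different set of power series, but the argument is structurally identical, so no separate justification is needed.
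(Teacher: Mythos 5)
Your proposal is correct and follows essentially the same route as the paper: both rest on the Leibniz-rule decomposition of $\partial f_j/\partial X_i$ in terms of the $\partial g_k/\partial X_i$ modulo $I$, followed by repeated application of Lemma \ref{matrixlemma} to the generating set obtained by concatenating the $g$'s and the $f$'s (the paper phrases this as a chain reduction to the case of one added redundant generator, while you phrase it as a one-sided inclusion through the combined matrix plus symmetry). No gaps.
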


\begin{proof}
Since $(f_1,\ldots,f_m)=(f_1,\ldots,f_m, g_1)=\cdots=(f_1,\ldots,f_m, g_1,\ldots,g_{n-1})=(f_1,\ldots,f_m, g_1,\ldots,g_n)=(f_2,\ldots,f_m, g_1,\ldots,g_n)=\cdots=(f_m, g_1,\ldots,g_n)=(g_1,\ldots,g_n)$, we may assume that $n=m+1$ and $f_i=g_i$ for any $1\le i\le m$.
Put $g_{m+1}=\sum_{i=1}^m a_i f_i$.
Then
\begin{align*}
\left(\frac{\partial g_j}{\partial X_i}\right)=
\begin{pmatrix}
   \dfrac{\partial f_1}{\partial X_1} & \cdots &  \dfrac{\partial f_m}{\partial X_1} & \sum\limits_{i=1}^m a_i \dfrac{\partial f_i}{\partial X_1} + \sum\limits_{i=1}^m  \dfrac{\partial a_i}{\partial X_1}f_i \\
   \vdots & \ddots & \vdots & \vdots \\
   \dfrac{\partial f_1}{\partial X_n} & \cdots &  \dfrac{\partial f_m}{\partial X_n} & \sum\limits_{i=1}^m a_i \dfrac{\partial f_i}{\partial X_n} + \sum\limits_{i=1}^m  \dfrac{\partial a_i}{\partial X_n}f_i  \\
\end{pmatrix}.
\end{align*}
By Lemma \ref{matrixlemma}, we have $I_r(\partial f_j/\partial X_i)\subseteq I_r(\partial g_j/\partial X_i)\subseteq I_r(\partial f_j/\partial X_i)+I$.
\end{proof}

It is well known that the Jacobian ideal of an affine algebra over a field is well-defined, as shown in the argument of Remark \ref{J is w.d.}.
In the case of complete equicharacteristic local rings, the Jacobian ideal can be computed and compared by using the properties of the local ring.

\begin{prop}\label{local unique}
Let $R$ be an equicharacteristic complete local ring with residue field $k$.
Suppose that $F:A:=k\llbracket X_1, \ldots,X_m\rrbracket/(f_1,\ldots,f_c) \to R$ and $G:B:=\llbracket Y_1, \ldots,Y_n\rrbracket/(g_1,\ldots,g_d) \to R$ are ring isomorphisms.
Then $F(I_{m+r}(\partial f_j/\partial X_i)A)=G(I_{n+r}(\partial g_j/\partial Y_i)B)$ holds for any integer $r$.
\end{prop}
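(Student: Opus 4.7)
The plan is to build a common presentation of $R$ as a quotient of $U:=k\llbracket X_1,\ldots,X_m,Y_1,\ldots,Y_n\rrbracket$, exhibit two natural generating sets of the kernel, and then invoke Lemma~\ref{independent of generator} to compare minors. First, since $\phi_F:k\llbracket X_1,\dots,X_m\rrbracket\twoheadrightarrow A$ is surjective (and local), I choose lifts $\tilde{y}_j\in(X_1,\dots,X_m)$ of $F^{-1}(G(\bar{Y}_j))\in A$, and symmetrically lifts $\tilde{x}_i\in(Y_1,\dots,Y_n)$ of $G^{-1}(F(\bar{X}_i))\in B$. Define $\pi_1\colon U\to A$ by $X_i\mapsto\bar{X}_i$, $Y_j\mapsto\overline{\tilde{y}_j}$, and $\pi_2\colon U\to B$ by $X_i\mapsto\overline{\tilde{x}_i}$, $Y_j\mapsto\bar{Y}_j$. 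By construction $F\pi_1$ and $G\pi_2$ agree on the generators $X_i,Y_j$ of $U$, hence are equal; as $F$ and $G$ are isomorphisms, $I:=\ker\pi_1=\ker\pi_2$.

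The second step is to write down two generating sets of $I$. Using that $\tilde{y}_j$ lies in the maximal ideal, the substitution $Y_j\mapsto\tilde{y}_j$ yields a well-defined isomorphism $U/(Y_1-\tilde{y}_1,\dots,Y_n-\tilde{y}_n)\cong k\llbracket X_1,\dots,X_m\rrbracket$, and composing with the quotient by $(f_1,\dots,f_c)$ gives $A$. Hence $I=(f_1,\dots,f_c,Y_1-\tilde{y}_1,\dots,Y_n-\tilde{y}_n)U$, and symmetrically $I=(g_1,\dots,g_d,X_1-\tilde{x}_1,\dots,X_m-\tilde{x}_m)U$.

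Third, compute the Jacobian matrices of these two generating sets with respect to all $m+n$ variables. Since $f_j\in k\llbracket X\rrbracket$ and $\tilde{y}_j\in k\llbracket X\rrbracket$, the first Jacobian has the block lower-triangular shape
\[
  M_1=\begin{pmatrix} P & -\widetilde{P} \\ 0 & I_n\end{pmatrix},
\]
where $P=(\partial f_j/\partial X_i)$ is $m\times c$ and $\widetilde{P}=(\partial\tilde{y}_j/\partial X_i)$ is $m\times n$; similarly $M_2$ has the shape $\bigl(\begin{smallmatrix} I_m & 0\\ -\widetilde{Q} & Q\end{smallmatrix}\bigr)$ with $Q=(\partial g_j/\partial Y_i)$. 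A direct block-determinant argument then shows that for every integer $r$,
\[
  I_{m+n+r}(M_1)=I_{m+r}(P)\qquad\text{and}\qquad I_{m+n+r}(M_2)=I_{n+r}(Q):
\]
any nonzero $(m+n+r)$-minor of $M_1$ forces one to pick all $n$ bottom rows together with the $n$ right columns (otherwise the zero block forces vanishing or the identity block contributes a zero minor), so the remaining factor is an $(m+r)$-minor of $P$; the analogous fact for $M_2$ is identical after swapping the roles of the two factors.

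Finally, apply Lemma~\ref{independent of generator} to the two generating sets of $I$ in $U$: it gives $I_{m+n+r}(M_1)+I=I_{m+n+r}(M_2)+I$, hence $I_{m+r}(P)+I=I_{n+r}(Q)+I$ inside $U$. Projecting along the common map $F\pi_1=G\pi_2\colon U\twoheadrightarrow R$ turns these into the ideals $F(I_{m+r}(\partial f_j/\partial X_i)A)$ and $G(I_{n+r}(\partial g_j/\partial Y_i)B)$ respectively, proving their equality. The main obstacle is the first step: producing the two compatible maps $\pi_1,\pi_2$ with identical kernels and controlling the kernels via the explicit ``graph'' generators $Y_j-\tilde{y}_j$ and $X_i-\tilde{x}_i$; once that is in hand, the block-minor computation and Lemma~\ref{independent of generator} do the rest.
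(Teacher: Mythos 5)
Your proof is correct, and it takes a genuinely different route from the paper's. The paper argues by induction on $m+n$: the minimal case $m=n=e$ (the embedding dimension) is handled by lifting $\psi=G^{-1}F$ to an isomorphism $\phi$ of the two power series rings, replacing $g_1,\dots,g_d$ by $\phi(f_1),\dots,\phi(f_c)$ via Lemma~\ref{independent of generator}, and comparing Jacobian matrices through the chain rule and Fitting invariants; the inductive step removes one superfluous variable at a time using an element $h\in I\setminus(X_1,\dots,X_m)^2$ whose partial derivative is a unit. You instead pass to a single common presentation over $U=k\llbracket X_1,\dots,X_m,Y_1,\dots,Y_n\rrbracket$, describe its kernel by the two ``graph'' generating sets $(f_j,\,Y_j-\tilde y_j)$ and $(g_j,\,X_i-\tilde x_i)$, exploit the block-triangular shape of the resulting Jacobian matrices to identify $I_{m+n+r}(M_1)$ with $I_{m+r}(\partial f_j/\partial X_i)U$ and $I_{m+n+r}(M_2)$ with $I_{n+r}(\partial g_j/\partial Y_i)U$, and then apply Lemma~\ref{independent of generator} exactly once; this avoids both the induction and the chain-rule/Fitting-invariant comparison, at the price of having to verify $\ker\pi_1=\ker\pi_2$ and the explicit generators of this kernel, which you do correctly (the change of variables $Y_j\mapsto Y_j-\tilde y_j$ gives $\ker\pi_1=(f_1,\dots,f_c,Y_1-\tilde y_1,\dots,Y_n-\tilde y_n)U$, and symmetrically for $\pi_2$). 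Two small remarks: first, your parenthetical justification of $I_{m+n+r}(M_1)=I_{m+r}(P)$ is imprecise, since a nonzero $(m+n+r)$-minor need not use all $n$ bottom rows (it may involve entries of $-\widetilde P$); the clean argument is that adding multiples of the rows of $(0\ \ I_n)$ to the top rows clears $-\widetilde P$ without changing determinantal ideals (equivalently, both matrices present the same cokernel, so the ideals in question are its Fitting ideals), after which the identity is immediate. Second, the equality $F\pi_1=G\pi_2$ tacitly uses that $F$ and $G$ restrict to the same embedding of $k$ into $R$, so that the two maps agree on coefficients and not only on the variables; the paper's proof makes the same tacit identification when it asserts $(\phi(f_1),\dots,\phi(f_c))=(g_1,\dots,g_d)$, so this is a shared convention rather than a defect of your approach.
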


\begin{proof}
Put $\psi=G^{-1}F$, $I=(f_1,\ldots,f_c)$, and $J=(g_1,\ldots,g_d)$.
Let $e$ be the embedding dimension of $R$, which is equal to the dimensions of $(X_1, \ldots,X_m)/(I+(X_1, \ldots,X_m)^2)$ and $(Y_1, \ldots,Y_n)/(J+(Y_1, \ldots,Y_n)^2)$ as $k$-vector space.
Then $m,n\ge e$, and $m=e$ holds if and only if $I$ is contained in $(X_1, \ldots,X_m)^2$.
We prove $\psi(I_{m+r}(\partial f_j/\partial X_i)A)=I_{n+r}(\partial g_j/\partial Y_i)B$ by induction on $m+n$.

First we deal with the case $m=n=e$.
For any $1\le i\le m$, we put $\psi(X_i +I)=\alpha_i+J$, where $\alpha_i\in (Y_1, \ldots,Y_n)$.
There is a ring homomorphism $\phi:S:=k\llbracket X_1, \ldots,X_m\rrbracket \to T:=\llbracket Y_1, \ldots,Y_n\rrbracket:X_i\mapsto\alpha_i$.
Then $\phi(X_1, \ldots,X_m)+J=(Y_1, \ldots,Y_n)$ as $\psi$ is an isomorphism.
Since $n=e$, we have $J\subseteq(Y_1, \ldots,Y_n)^2$, and hence $\phi(X_1, \ldots,X_m)=(Y_1, \ldots,Y_n)$ by \cite[Corollary of Theorem 2.2]{Mat}.
This says that $\phi$ is surjective and so it is an isomorphism as $m=n$.
The equality $(\phi(f_1),\ldots,\phi(f_c))=(g_1,\ldots,g_d)$ holds.
Thanks to Lemma \ref{independent of generator}, we get $I_{n+r}(\partial \phi(f_j)/\partial Y_i)B=I_{n+r}(\partial g_j/\partial Y_i)B$.
One has the equality
$$
\delta_{ij}=\frac{\partial Y_j}{\partial Y_i}=\frac{\partial \phi(\phi^{-1}(Y_j))}{\partial Y_i}=\sum_{t=1}^m \phi\left(\frac{\partial \phi^{-1}(Y_j)}{\partial X_t}\right) \frac{\partial \phi(X_t)}{\partial Y_i}
$$
where $\delta_{ij}$ is the Kronecker delta.
Thus the matrix $(\partial \phi(X_j)/\partial Y_i)$ invertible.
The equality
$$
\frac{\partial \phi(f_j)}{\partial Y_i}=\sum_{t=1}^m \phi\left(\frac{\partial f_j}{\partial X_t}\right) \frac{\partial \phi(X_t)}{\partial Y_i}
$$
induces the following commutative diagram:
\[
  \xymatrix@C=50pt@R=30pt{
T^{\oplus c} \ar[r]^{\left(\phi\bigl(\tfrac{\partial f_j}{\partial X_i}\bigr)\right)}\ar@{=}[d] & T^{\oplus m} \ar[d]_{\rotatebox{90}{$\cong$}}^{\bigl(\tfrac{\partial \phi(X_j)}{\partial Y_i}\bigr)} \\
T^{\oplus c} \ar[r]^{\bigl(\tfrac{\partial \phi(f_j)}{\partial Y_i}\bigr)}& T^{\oplus n}.
}
\]
Hence $\phi(I_{m+r}(\partial f_j/\partial X_i))=I_{n+r}(\partial \phi(f_j)/\partial Y_i)$
because these are Fitting invariants of the same module.
We obtain $\psi(I_{m+r}(\partial f_j/\partial X_i)A)=I_{n+r}(\partial \phi(f_j)/\partial Y_i)B=I_{n+r}(\partial g_j/\partial Y_i)B$.

Next, we handle the case $m+n>2e$.
We may assume $m>e$.
There is $h\in I\setminus (X_1, \ldots,X_m)^2$
For some $1\le l\le m$, the coefficient of $X_i$ in $h$ is not zero.
By this, $X_1,\ldots X_{l-1}, h, X_{l+1} \ldots,X_m$ is a regular system of parameters of $k\llbracket X_1, \ldots,X_m\rrbracket$ and $x:=\partial h/\partial X_l$ is unit.
There is a natural isomomorphism $\chi: k\llbracket X_1,\ldots X_{l-1}, X_{l+1} \ldots,X_m\rrbracket \to k\llbracket X_1, \ldots,X_m\rrbracket/(h)$.
We put $\chi^{-1}(I/(h))=(h_1,\ldots, h_e)$, $\chi$ induces an  isomomorphism $\chi: C:=k\llbracket X_1,\ldots X_{l-1}, X_{l+1} \ldots,X_m\rrbracket/(h_1,\ldots, h_e)\to k\llbracket X_1, \ldots,X_m\rrbracket/(h_1,\ldots, h_e, h)=A$.
By the induction hypothesis, we have $(\psi\circ\chi)(I_{m-1+r}(U)C)=I_{n+r}(\partial g_j/\partial Y_i)B$, where 
\begin{align*}
U=
\begin{pmatrix}
   \dfrac{\partial h_1}{\partial X_1} & \cdots & \dfrac{\partial h_e}{\partial X_1} \\
   \vdots & \ddots & \vdots \\
   \dfrac{\partial h_1}{\partial X_{l-1}} & \cdots & \dfrac{\partial h_e}{\partial X_{l-1}} \\
   \dfrac{\partial h_1}{\partial X_{l+1}} & \cdots & \dfrac{\partial h_e}{\partial X_{l+1}} \\
   \vdots & \ddots & \vdots  \\
   \dfrac{\partial h_1}{\partial X_m} & \cdots & \dfrac{\partial h_e}{\partial X_m} \\
\end{pmatrix}.
\end{align*}
On the other hand, it follows from $I=(h_1,\ldots, h_e, h)$ and Lemma \ref{independent of generator} that $I_{m+r}(\partial f_j/\partial X_i)$ is equal to the ideal generated by the $(m+r)$-minors of
\begin{align*}
\begin{pmatrix}
   \dfrac{\partial h_1}{\partial X_1} & \cdots & \dfrac{\partial h_e}{\partial X_1} & \dfrac{\partial h}{\partial X_1} \\
   \vdots & \ddots & \vdots \\
   \dfrac{\partial h_1}{\partial X_{l-1}} & \cdots & \dfrac{\partial h_e}{\partial X_{l-1}}  & \dfrac{\partial h}{\partial X_{l-1}} \\
   0 & \cdots & 0 & x \\
   \dfrac{\partial h_1}{\partial X_{l+1}} & \cdots & \dfrac{\partial h_e}{\partial X_{l+1}} & \dfrac{\partial h}{\partial X_{l+1}} \\
   \vdots & \ddots & \vdots  \\
   \dfrac{\partial h_1}{\partial X_m} & \cdots & \dfrac{\partial h_e}{\partial X_m} & \dfrac{\partial h}{\partial X_m} \\
\end{pmatrix},
\end{align*}
which is equal to $I_{m-1+r}(U)$.
The proof is now completed.
\end{proof}

\begin{rem}
Let $K$ and $L$ be a field, and $R$ a ring.
Assume that $R$ is finitely generated over both $K$ and $L$.
In this case, a natural question arises as to whether $J_n^K(R)=J_n^L(R)$ holds for any integer $n$.
For example, let $R=\mathbb{C}[X_1,\dots,X_m]/(f_1,\ldots,f_n)$ be a quotient of a polynomial ring over the field $\mathbb{C}$ of complex numbers.
The natural surjection $\phi:\mathbb{R}[X_1,\dots,X_m, Y]\to \mathbb{C}[X_1,\dots,X_m]$ such that $\phi(Y)=i$ induces an isomorphism $\mathbb{R}[X_1,\dots,X_m, Y]/(g_1,\ldots,g_n, Y^2+1)\cong R$, where $g_1,\ldots,g_n$ are representatives of the inverse images of $f_1,\ldots,f_n$.
The Jacobian matrix of $g_1,\ldots,g_n, Y^2+1$ is of the form
\begin{align*}
A:=
\begin{pmatrix}
   B & 0 \\
   \ast & 2Y \\
\end{pmatrix}.
\end{align*}
Then $\phi(B)$ is the Jacobian matrix of $f_1,\ldots,f_n$.
As $2Y$ is unit in $\mathbb{R}[X_1,\dots,X_m, Y]/(g_1,\ldots,g_n, Y^2+1)$, we have $J_n^{\mathbb{R}}(R)=\phi(I_{n+m+1-\dim R}(A))R=\phi(I_{n+m-\dim R}(B))R=I_{n+m-\dim R}(\phi(B))R=J_n^{\mathbb{C}}(R)$ for any $n$.
Moreover, a similar argument shows that $J_n^{K}(R)=J_n^{L}(R)$ holds for any $n$, any finite (simple) extension $L/K$, and any affine $L$-algebra $R$.
However, in general, it is unknown whether equality holds.
\end{rem}

\begin{ac}
The author would like to thank his supervisor Ryo Takahashi, and Yuki Mifune, for providing the motivation for this research.
The author is also grateful to Linquan Ma, Yuya Otake, Ryo Takahashi, and Tatsuki Yamaguchi for their valuable comments.
\end{ac}

\end{document}